\newtheorem{definition}{Definition}
\newtheorem{conjecture}{Conjecture}
\newtheorem{theorem}{Theorem}
\newtheorem{prop}{Proposition}
\newtheorem{rem}{Remark}
\def\CC{\mathbf C}
\def\LL{\mathcal L}
\def\FF{\mathcal F}
\def\A{\mathsf{A}}
\def\B{\mathsf{B}}
\def\C{\mathsf{C}}
\def\x{\mathsf{x}}
\def\L{\mathsf{L}}
\def\OO{\mathsf{O}}
\def\I{\mathbf{I}}
\def\K{\mathsf{K}}
\def\H{\mathsf{H}}
\def\X{\mathsf{X}}
\def\Y{\mathsf{Y}}
\def\F{\mathbf{F}}
\def\G{\mathsf{G}}
\def\Q{\mathsf{Q}}
\def\R{\mathsf{R}}
\def\S{\mathsf{S}}
\begin{document}

\title{NEW INVARIANTS OF LINKS  FROM A SKEIN INVARIANT OF COLORED LINKS }

\author{Francesca Aicardi }
 \address{The  Abdus Salam  Centre for Theoretical Physics (ICTP),  Strada  Costiera, 11,   34151  Trieste, Italy.}
 \email{faicardi@ictp.it}

\keywords{knot theory, knots invariants}

\subjclass{57M25 }

\date{}

 \begin{abstract}
New invariants  of  links are    constructed by  using  the skein invariant polynomial  of colored links  defined  in \cite{A1}. These  invariants  are  stronger  than  the  homflypt polynomial.
\end{abstract}

 \maketitle
\section*{Introduction}
 This  note refers to  the  work \cite{A1}, in  which  I have  defined,  via  skein  relation,  an  invariant of  colored  links.   Let me recall  some  definitions.

 An  oriented  link   with  $n$  components is  a  set  of $n$  disjoint  smooth oriented  closed   curves embedded  in $S^3$.  An oriented  link  is  {\sl  colored},  if each  component  is  provided  with  a  color. In other  words,  a  link is  colored  if a  function $\gamma$ is  defined  on the  set  of  components    to  a  finite   set  $H$  of  colors. Let's call {\sl  coloration}  such a   function $\gamma$.   Every  coloration introduces an  equivalence relation  in  the  set  of  components: two  components  are  equivalent  if  they  have  the  same  color.   Let $\CC$  be  the  set  of  components  of a  link $L$.  Two  colorations $\gamma$  and  $\gamma'$  of  $L$    are  said {\sl  equivalent} if  there is  a  bijection in $H$  between  the   images   $\gamma(\CC)$  and  $\gamma'(\CC)$.  Two    colorations  define   the same  partition  of  the  set  $\CC$  into  classes  of equivalence if  and  only  if  they  are  equivalent.

An invariant of colored links takes the same value on isotopic links with the  same  coloration, and may take different values on the same link with different colorations.
The   invariant $F$ of  colored links defined in  \cite{A1}  takes the  same value on  isotopic  links  with    equivalent  colorations.  In  Section 1,    the  statement  of the  theorem defining  the  invariant $F$ is  reported.

The invariant  $F$  has  three  variables,  say  $x$,  $w$  and  $t$.   There  are  two natural ways to define  an  invariant  of classical  links  by  using  $F$. The  first  way,
is  to  consider  a  link   as  a colored  link, whose  components  have all  the  same  color. In  this  case  the  invariant  $F$  becomes  independent of  $x$, and is  in fact  another  instance  of  the  homflypt  polynomial. In  particular,   it  becomes  the Jones  polynomial  in  the  variable  $t$  setting  $w=t^{1/2}$. Given a classical  link $L$, we  will  denote  $F^1(L)$  the  invariant  $F$ on  the  colored  link obtained  by  coloring  $L$  with a  sole  color.

The second  way,  is to  consider a link  as a  colored link, whose components  have  all  different  colors.  In  this  case  $F$ yields  an  invariant  which is  stronger  than the  homflypt polynomial. Given a classical  link $L_n$  with  $n$ components,  we  will  denote  $F^n(L_n)$  the  invariant  $F$ on  the  colored  link obtained  by  coloring  $L_n$  with  n colors.

Among  the  89 pairs of   non-isotopic links with  at  most  11 crossings, non  distinguished  by  the homflypt polynomial, 49  pairs  contain links with  three  components.  I  have  computed $F^3$  on  20 pairs of  such pairs:  six of  these  pairs  are distinguished  by  $F^3$ (see Proposition \ref{p2} in Section 3).

However,  there  are   other  ways  to  define  an  invariant  of  classical  links by  using $F$.  Indeed,  we  can  calculate  $F$  on  the  whole  set of    colored  links  obtained  from  a  link $L$ with $n$ components by  coloring it  by  all the  possible  non-equivalent  colorations. Then,  for  every  integer partition $p$ of  $n$ in $c$ parts ($c\le n$), we take   all  partitions  of  the  set  of $n$ components  in  $c$ blocks,  whose  cardinalities    correspond  to  the  partition $p$. These partitions  of  the set  of  components  are  said to be of type  $p$.     We thus  define the invariant  $\F_p^c$    as  the  set  of  values taken by  $F$ on the colored  links   obtained  by  coloring $L$ with  $c$ colors according to each one  of the type $p$ partitions, i.e.,   by coloring   the  components  with the  same  color if and only if  they  belong to  the same  block. Example:  the  link  below  has  three  components,  that  are colored according  to  the  partitions indicated.

\centerline{\includegraphics{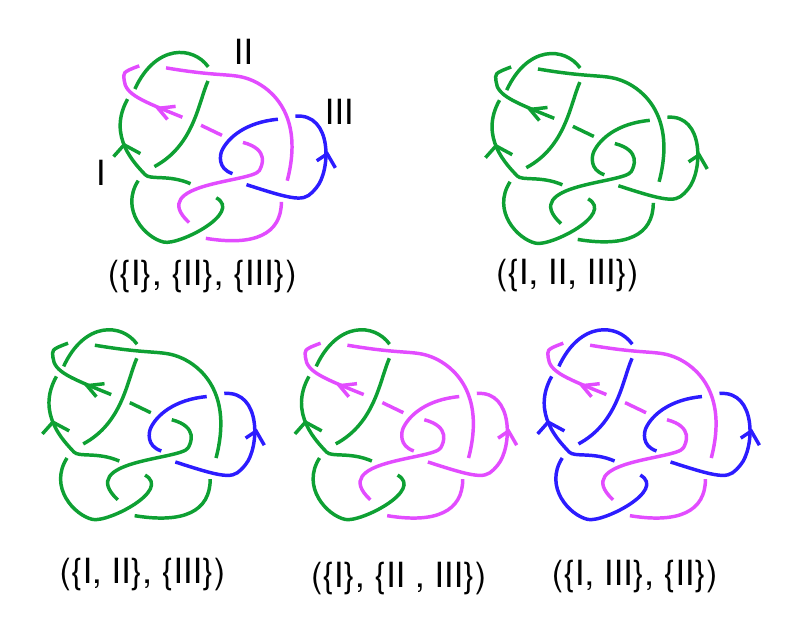}}

 The  invariant $\F^2_{(2,1)}$  distinguishes    pairs of  the above mentioned  pairs of  non  isotopic  links, which  are  not  distinguished by  $F^3$.

As  we  have  said  in \cite{A1}, the definition  of  $F$ is a  byproduct  of  the  study of another  class  of  links,  the  {\sl  tied  links} (see \cite{AJTL}).  In fact,  the  tied  links  are links  whose components  may  be   connected  in  several  points  by  ties.  But  these  ties behave   exactly  as  an  equivalence relation  between  the  link  components,  so  that   isotopic  tied  links are  isotopic  colored links with  equivalent  colorations (see Section 1 for more precise  definitions).  Tied  links  can  be  seen also  as  the  closure of  tied  braids,  that constitute  a  diagrammatic  representation  of  the  elements of an  abstract  algebra,  the  so-called  nowadays  bt-algebra (algebra  of  braids  and  ties).   In  \cite{AJTL} we  have  defined,  via  skein relation, a   polynomial  $\FF$,  invariant of  tied links, and we  have shown that $\FF$ can  be realized also  by  the Jones recipe,  using  the  trace on the  bt-algebra  established in \cite{AJtrace}. The  properties of  the  trace  allowed  us  to make  a computer program  to  calculate  $\FF$.   We  may  define a  tied  link $TL$ from a  colored  link $L$  so  that  $F(L)$ is  recovered from  $\FF(TL)$  by  a  variable change.  This has allowed me to  verify  the  values of  $F$ obtained on  colored  links  by  the  skein  relation,  presented  in  this  paper.

Finally, I  show  some  unexpected  identities  satisfied  from  the  values  forming the  invariants $F^c_p$  for  the  pairs  distinguished  by such invariants,  that  allows to  formulate a conjecture.  The calculation  of  $F^c_p$  on  all  pairs of  three-component  links  not distinguished  by  the  homflypt  polynomial  has to   be  completed in  order  to  understand  these identities  or to disprove the conjecture.

\section{The  invariant  $F$ of  colored  links}

\begin{definition}\label{isotop} \rm Two oriented colored links    are  {\sl c-isotopic}  if
  they  are  ambient  isotopic,  and  their  colorations  are  equivalent.
\end{definition}

A  colored link diagram  is  like the  diagram of a  link,  where  each component  is  colored.

Let  $R$  be  a  commutative  ring and  $\LL_c$  be  the set of   colored oriented link  diagrams.
By   {\sl invariant of  colored  links} we  mean     a  function $I:  \LL_c \rightarrow R$, which is  constant     on each class  of c-isotopic links.

 \begin{rem}\rm  \label{rem1} In  the  sequel, we denote by   $L$     an  oriented   colored  link  as  well as  its  diagram,  if  there  is  no risk  of  confusion. \end{rem}


 \begin{theorem} \label{the1} There  exists  a rational function  in  the  variables $x,w,t$,   $F:\LL_c \rightarrow   \mathbb{Q}(x,t,w)$,     invariant  of oriented colored  links,    uniquely  defined  by  the   following  three  conditions  on  colored-link  diagrams:
\begin{itemize}
\item[I] The  value of  $F$ is  equal to 1  on  the  unknotted circle.

\item[II] Let  $L$   be  a colored link,  with  $n$  components  and  $c$  colors.  By  $[L;\OO]$  we  denote  the  colored link  with  $n+1$ components consisting of $L$  and  the unknotted and unlinked  circle, colored  with  the $(c+1)$-th     color.
Then

\[  F([L;\OO])=  \frac {1}{wx} F(L). \]

\item[III](skein  relation)  Let $L_+$  and   $L_-$   be  the  diagrams  of two colored  links,  that are  the  same outside    a  small  disc into  which two  strands  enter,   and  inside  this  disc  look  as
   shown  in figure \ref{Linv1}a. Black and gray  indicate  any two  colors,  non  necessarily  distinct.  Similarly,  let   $L_\sim$  and   $L_{+,\sim}$  be two links  that  inside  the  disc look  as  shown  in  figure \ref{Linv1}b,  while outside  the  disc  coincide with $L_+$  and  $L_-$, except -possibly-  for  the  colors of all the components of the  link having  the same colors as    the  strands entering  the  disc:  all these  components   in   $L_\sim$  and   $L_{+,\sim}$  are  colored by  a  sole  color. Then  the  following  identity  holds:
 \[ \frac{1}{  w}F( L_ +)-w F (L_-) = \left(1-t^{-1}  \right) F (L_\sim) + \frac{1}{w} (1-t^{-1} ) F(L_{+,\sim}).\]

\end{itemize}
\end{theorem}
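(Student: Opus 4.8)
The plan is to adapt the Lickorish--Millett existence-and-uniqueness scheme for the homflypt polynomial so that it also records the colorings and absorbs the extra term of condition III. I would prove uniqueness first, showing that the three conditions force the value of $F$ on every colored diagram, and then obtain existence either by checking that the reduction so produced is independent of all choices and invariant under the Reidemeister moves, or, more cheaply, by invoking the tied-link polynomial $\FF$ of \cite{AJTL}: since $\FF$ is realized by the Markov trace of \cite{AJtrace} and $F(L)$ is recovered from $\FF(TL)$ by a change of variables, the function so defined automatically satisfies I--III.

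For uniqueness I would induct on a lexicographically ordered complexity $(c,k,b)$ attached to a colored diagram, where $c$ is the number of colors, $k$ the number of crossings, and $b$ the number of \emph{bad} crossings with respect to a fixed ordering of the components and a choice of base point on each, as in the descending-diagram argument. When $b=0$ the diagram is descending, hence isotopic to a colored unlink, and its value is pinned down by condition I together with the normalization II. To reduce $b$ I rewrite the skein relation III so as to solve for the crossing being corrected; this expresses it through the flipped diagram, which has the same $c$ and $k$ but one fewer bad crossing, together with the two smoothed terms. The point is that these smoothed terms are strictly simpler: $F(L_\sim)$ has fewer crossings, while the recoloring forces $F(L_{+,\sim})$ and $F(L_\sim)$ to have strictly fewer colors precisely when the two strands entering the disc carry different colors, since merging their color classes then lowers $c$.

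The two colored regimes must be separated. If the strands share a color, the recolored diagram $L_{+,\sim}$ coincides with $L_+$ and the four-term relation collapses to the genuine three-term identity $w^{-1}t^{-1}F(L_+)-wF(L_-)=(1-t^{-1})F(L_\sim)$ inside that single color class; this is a crossing-lowering homflypt skein relation, so the stratum $c=1$ is exactly the classical homflypt polynomial, where the theorem is known, and the general same-color step decreases $k$ at fixed $c$. If the strands carry different colors, the correcting move keeps $(c,k)$ and lowers $b$, while both smoothed terms drop to the $(c-1)$-stratum, already handled by the outer induction. Thus every reduction strictly decreases $(c,k,b)$ and uniqueness follows.

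The main obstacle is existence, specifically the well-definedness of this procedure. One must verify that the common value does not depend on the ordering of components, on the base points, or on the sequence of corrected crossings, and that it is unchanged by the three Reidemeister moves performed on colored diagrams. The delicate case is Reidemeister III together with the recoloring rule: because the smoothed terms live in a different coloring stratum than the diagram itself, invariance is not a purely local statement, and one must check that merging the color classes attached to the two smoothing strands depends only on the colored-isotopy class and is compatible with the four-term relation across every local configuration. I would either carry out this case analysis directly or, preferring the algebraic route, deduce invariance from the Markov property of the bt-algebra trace, which packages exactly these compatibilities.
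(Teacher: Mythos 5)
First, a point of reference: this note does not actually contain a proof of Theorem \ref{the1}; it only restates it, the proof living in \cite{A1}. What the note does make clear is that the intended argument is skein-theoretic, with the tied-link polynomial $\FF$ of \cite{AJTL} and the trace of \cite{AJtrace} serving as an independent check on the values of $F$. Measured against that, your uniqueness half is the right adaptation of the Lickorish--Millett scheme and is structurally sound: the lexicographic complexity $(c,k,b)$ works because relation III (equivalently Va/Vb of Remark \ref{rem2}) expresses a switch at a bi-colored crossing through $L_-$ (same $c$ and $k$, one fewer bad crossing) and through $L_\sim$, $L_{+,\sim}$, which both sit in the $(c-1)$-stratum, while a switch at a same-colored crossing uses relation IV and lowers $b$ or $k$ at fixed $c$. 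However, your base case is wrong as stated: conditions I and II do \emph{not} pin down the value of $F$ on a colored unlink when some color is repeated, because II only applies when the added circle carries a \emph{new} color. For $n>c$ the value $F(\OO^c_n)=y^{n-c}/(wx)^{n-1}$ is forced only by invoking relation IV again, exactly as in Remark \ref{rem3} (which inducts on $n-c$, realizing $\OO^c_n$ as the $L_\sim$ of a one-crossing diagram of $\OO^c_{n-1}$). Your induction must fold this computation into the $b=0$ case; as written, the reduction terminates at objects whose value has not been determined.

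Second, the existence half is where your proposal stops short of a proof, and it is the half that carries essentially all of the difficulty. The direct route (independence of the component ordering, base points, and order of switches, plus colored Reidemeister invariance, aggravated by the fact that relation III mixes coloring strata) is named but not carried out; identifying the obstacle is not the same as overcoming it. The algebraic route is legitimate in outline --- and it is essentially the consistency check this note itself describes --- but the word ``automatically'' hides the actual content. You would need to (i) verify that the assignment $L\mapsto TL$, sending a colored link to the tied link whose ties realize the partition into color classes, is well defined on c-isotopy classes; (ii) invoke the existence of $\FF$ as an invariant of tied links together with its own four-term skein relation, which is what \cite{AJTL} and \cite{AJtrace} provide; and (iii) check that under the stated change of variables the tied-link relation transforms precisely into III, with ``add a tie'' corresponding to ``merge the two color classes,'' and that the normalizations I and II also match. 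Step (iii) is pure bookkeeping but it is the step that makes the pulled-back function satisfy the axioms; until it is written out you have a plan rather than a proof. With the base case repaired and (iii) done, your argument would establish the theorem by a route consistent with the one the paper attributes to \cite{A1}.
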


\begin{figure}[H]
\centering
\includegraphics{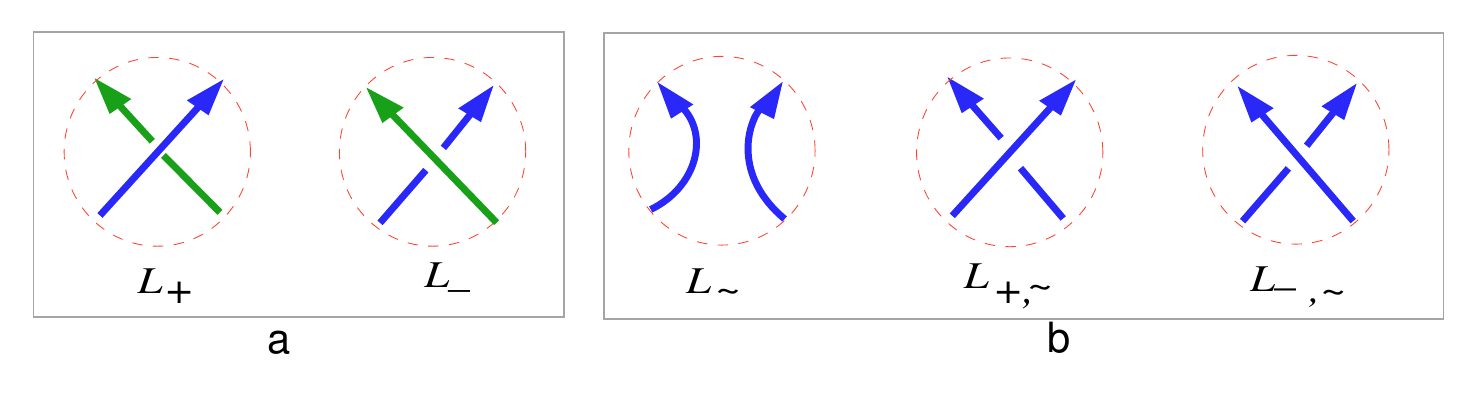}
\caption{a) The  discs  where   $L_+$  and $L_-$  are  not c-isotopic. b) The  discs where  $L_{\sim}$,   $L_{+,\sim}$,  and $L_{-,\sim}$   are not c-isotopic. Blue and  green   indicate  any  two   colors. }\label{Linv1}
\end{figure}

\begin{rem}\label{rem2}\rm
The skein relation  III  holds  for  {\sl any}  two  colors  of  the  involved   strands. In  particular, if  the  two  colors coincide, then  $L_+=L_{+,\sim}$  and  $L_-=L_{-,\sim}$, so  that relation III  is  reduced  to  the relation

 \begin{itemize}
 \item[IV]
 \[ \frac{1}{t w }F( L_ {+,\sim})- w F (L_{-,\sim}) = (1-t^{-1}) F (L_\sim).\]
\end{itemize}
 Relation III also implies   following  two  relations; they  will  be  used in the  sequel.

\begin{itemize}
\item[Va]
 \[ \frac{1}{ w }F( L_ {+})= w \left[F(L_-) +\left( {t}-1 \right) F(L_{-,\sim})\right] + \left(t-1\right) F (L_\sim).\]

\item[Vb]
\[   w  F( L_ {-})= \frac{1}{ w } [F(L_+) +(t^{-1}-1) F(L_{+,\sim})] + ({t^{-1}}-1) F (L_,\sim).\]
 \end{itemize}
\end{rem}

\begin{rem}\label{rem3} \rm As  an  example of using  relations I, II  and  III,  let's  calculate  the  value  of  $F$ on  the  unlink $\OO^c_n$, consisting of  $n$ circles  and  $c$ colors.
If  $n=c$, then,  using II, $$F(\OO^c_c)= \frac{1}{wx}F(\OO^{c-1}_{c-1})=\left(\frac{1}{wx} \right)^{c-1}.$$
If  $n>c$,   then there are two circles colored by the same color. Regard them as belonging to $L_{_{\displaystyle\widetilde{\ \ }}}$. Then $L_+=L_-=\OO_{n-1}^c$ (see  figure).

\centerline{\includegraphics{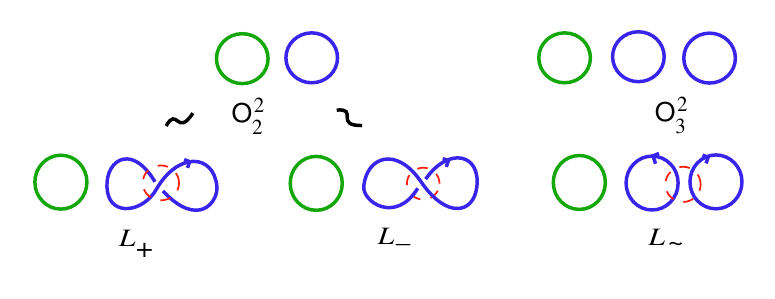}\label{Linv2}}

By IV,
$\displaystyle (1-t^{-1})F(\OO_n^c)=\Bigl(\frac1{tw}-w\Bigr)F(\OO_{n-1}^c)$.
So
$$F(\OO_n^c)=\frac{1-tw^2}{tw(1-t^{-1})}F(\OO_{n-1}^c) =
\frac{tw^2-1}{(1-t)w}F(\OO_{n-1}^c):= \frac{y}{xw}F(\OO_{n-1}^c),
$$
 where  \begin{equation}\label{formulay}  y=  \frac {x (t w^2-1)}{1-t}.\end{equation}
  By induction,
$\displaystyle F(\OO_n^c)=\Bigl(\frac{y}{xw}\Bigr)^{n-c}F(\OO_c^c)$.
 Thus
 \begin{equation}\label{formulaonc} F(\OO_n^c)=\frac{y^{n-c}}{(wx)^{n-1}} . \end{equation}

\end{rem}

\begin{rem} \label{rem4} \rm  Let  $L$  be  a  knot or   a  link whose components  have  all the  same  color. Then    $F(L)$  is  defined  by I and IV.   Recall  that  the  homflypt  polynomial $P$  is  defined  by  the condition  that  $P(\OO)=1$  and  the skein relation
 \begin{equation}\label{skeinrule}   \ell P(L_{+,\sim})  +  \ell^{-1} P(L_{-,\sim}) +  m  P(L_\sim)= 0, \end{equation}
 where $\ell$  and  $m$  are  two  complex  variables.
 Comparing the   skein  relation  IV,  multiplied  by  $ t^{1/2}$,  with the equation  above,  we see that   $F$      is  a  homflypt polynomial  with
     \[   \ell = i  \frac{1}{ w\sqrt{t}}   \quad \text{and} \quad m=i\left( \frac {1}{\sqrt t}-  \sqrt{t} \right). \]
     In  particular,  for  $w=t^{1/2}$  it  becomes  the  Jones  polynomial.
\end{rem}

\begin{rem} \label{rem5} \rm The  polynomial  $F$  is  multiplicative  with  respect to the  connected sum of  colored  links $L_1$  and  $L_2$ in  the  following  sense.  The  components $C_1$ and  $C_2$  of  the  two  links  that  merge  must  have the  same  color,  while  the  colors of  the components  in  $L_1$  not  equal  the  color  of  $C_1$ must  be  different from  the  colors  of  the  components  in $L_2$ not equal to the  color of  $C_2$.

\end{rem}

\section{The  new invariants of links}

Let  $L$  be  a  classical  link with $n$  components. We  denote by  $\CC_n:=\{C_1,C_2,\dots,C_n\}$  the  ordered  set of  the components of  $L$. A set-partition
 $$\I_j:=(I_1,I_2,\dots,I_{c_j})$$ of $\CC_n$,  contains  $c_j\le n$  non void blocks.  For  every  set-partition $\I_j$ of $\CC_n$,  we  define a  colored link $L_j$ in  this  way: we  color the  components of  $L$   with  $c_j$  colors,  in  such a  way  that  the  components  entering the  same  block  have  the  same  color.   Observe  that the  number $N$ of   non-equivalent colorations of the  link  $L$  satisfies  $N\le B_n$,  where $B_n$ is  the  $n$-th  Bell  number.   The  equality holds in  general except in   particular  cases  of symmetries among the  components of  the  link.

 \begin{definition}\rm Given  a  set partition $\I:= (I_1,I_2,\dots,I_{c})$ of  $\CC_n$, let  $n_i=|I_i|$, so  that $n_1+ n_2 +\dots+ n_c=n$.
   We  call the  integer partition $p$ of $n$ in $c$ part,
  \[  p:=(n_1,n_2,\dots, n_c)  \quad \quad  n_i\ge n_{i+1}\] the {\sl type}  of  the  partition $\I$.
\end{definition}

  For  instance,  if $n=4$, among  the  following partitions of $\CC_4$ in  two  blocks:
  \[ \I_1:=(\{C_1,C_2\},\{C_3,C_4\}), \quad  \I_2:=(\{C_1\},\{C_2,C_3,C_4\}), \quad   \I_3:=(\{C_1,C_4\},\{C_2,C_3\}),\]  the  partitions  $\I_1$   and  $\I_3$ have the  same  type $(2,2)$, and  $\I_2$ has  type $(3,1)$.

\begin{definition} \rm For  every partition $p$  of $n$ in $c$ parts   ($1\le c\le n$),   let $N^c_{p}$ be    the  number of partitions  of $\CC_n$  of  type $p$ which  define non-equivalent colorations  of  $L$  with $c$ colors,   and $L_1,\dots,L_{N^c_{p}}$ be the  corresponding  colored  links.  The  invariant $\F^c_p$ of $L$  is  the non-ordered   set
\[   \F^c_p(L):=\{ F(L_1), F(L_2), \dots, F(L_{N^c_{p}}) \}.\]
\end{definition}

\begin{rem}\label{rem6}\rm It is  evident  that for a  link $L$  wit  $n$  components,  if $c=1$, $p=(n)$, so $N^1_n=1$, and  $\F^1_{(n)}(L)=\{F^1(L)\}$, i.e., the  value  of $F$ on  the  link  $L$  with all  component  colored  with  the same  color.    Furthermore, for  $c=n$ the  invariant $\F^n_{\underbrace{\small{(1,1, \dots, 1)}}_n}= \{  F^n(L) \} $.
\end{rem}

\begin{rem}\label{rem7} \rm In  the  following  figures,  two  arrows going from  a  link $L$  with  a  marked  crossing, to two colored links $L_1$  and $L_2$,  indicate  that  $F(L)=\alpha F(L_1)+\beta F(L_2)$, where $\alpha$  and  $\beta$  are given by  relation  IV (Remark \ref{rem2});   three arrows going  from  a colored  link $L$  with a marked  crossing,  to  three links  $L_1$,  $L_2$ and  $L_3$, indicate  that  $F(L)=\gamma F(L_1)+\delta F(L_2)+\lambda F(L_3)$,  where  $\gamma$, $\delta$  and  $\lambda$  are  given  by  relations  Va  or   Vb, depending on  the sign of  the  marked crossing.
\end{rem}

\subsection{An example} We  give  here  the  experimental  evidence  that  for  $c>1$, $\F^c_p$  is more powerful than  the homflypt polynomial.

Consider,  for  instance,  the  links $A$  and  $B$, shown  in  \ref{Linv2}.  They  have  four components, six  positive  crossings,  and are  colored  with a  sole  color. $A$  and  $B$ are  evidently  non  isotopic.
The  calculation of  $F(A)$  and  $F(A)$   shows  that  $A$  and $B$   are  not  distinguished  by  the  homflypt polynomial (see  Remark \ref{rem4}). Indeed,
applying  the  skein relation IV  to the   crossings  marked  in  Figure  \ref{Linv3},   we obtain
   $$F(A):= F(A_{+,\sim})= tw^2 F(A_{-,\sim})+ (t-1)w F(A_\sim), $$   $$F(B):= F(B_{+,\sim})= tw^2 F(B{-,\sim})+ (t-1)w F(B_\sim).$$
   But $A_{-,\sim}=B_{-,\sim}=L_1$  and  $A_\sim=B_\sim=L_2$.  Therefore  $F$ coincides on  $A$ and on $B$.

\begin{figure}[H]
\centering
\includegraphics{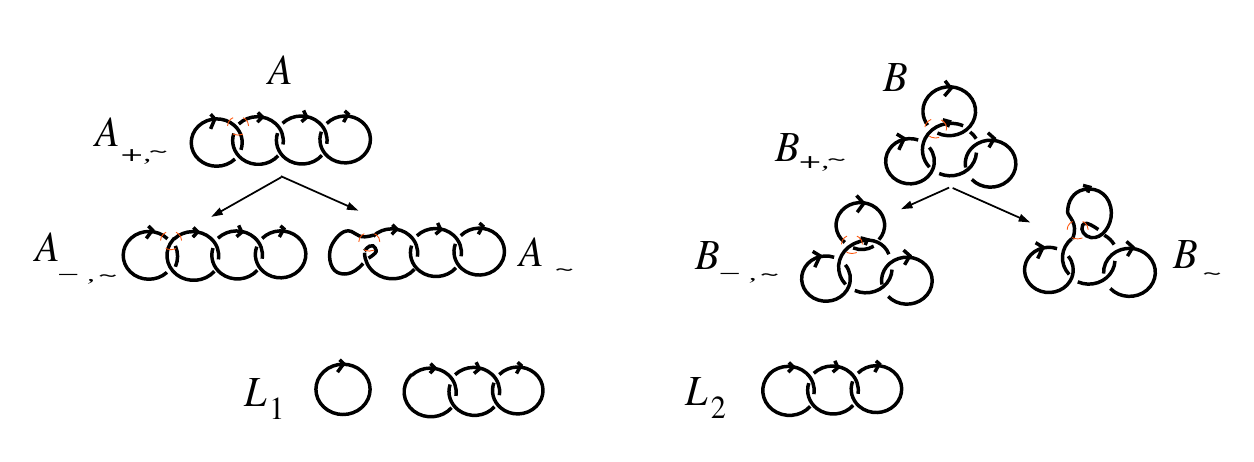}
\caption{  $F(A)$ and   $F(B)$  coincide}\label{Linv3}
\end{figure}

Consider now  the colored links  obtained  by  coloring $A$ and $B$ with two  colors, more precisely three components with one  color  and the  remaining   component  by  another  color (i.e.,  corresponding  to  the  partitions of  type   $p=(3,1)$).  There  are  two of  such  non-equivalent colorations of $A$  and  of $B$,  giving  the  colored  links  $A_1$, $A_2$  and  $B_1$,  $B_2$ respectively  (see  Figure \ref{Linv4})
\begin{figure}[H]
\centering
\includegraphics{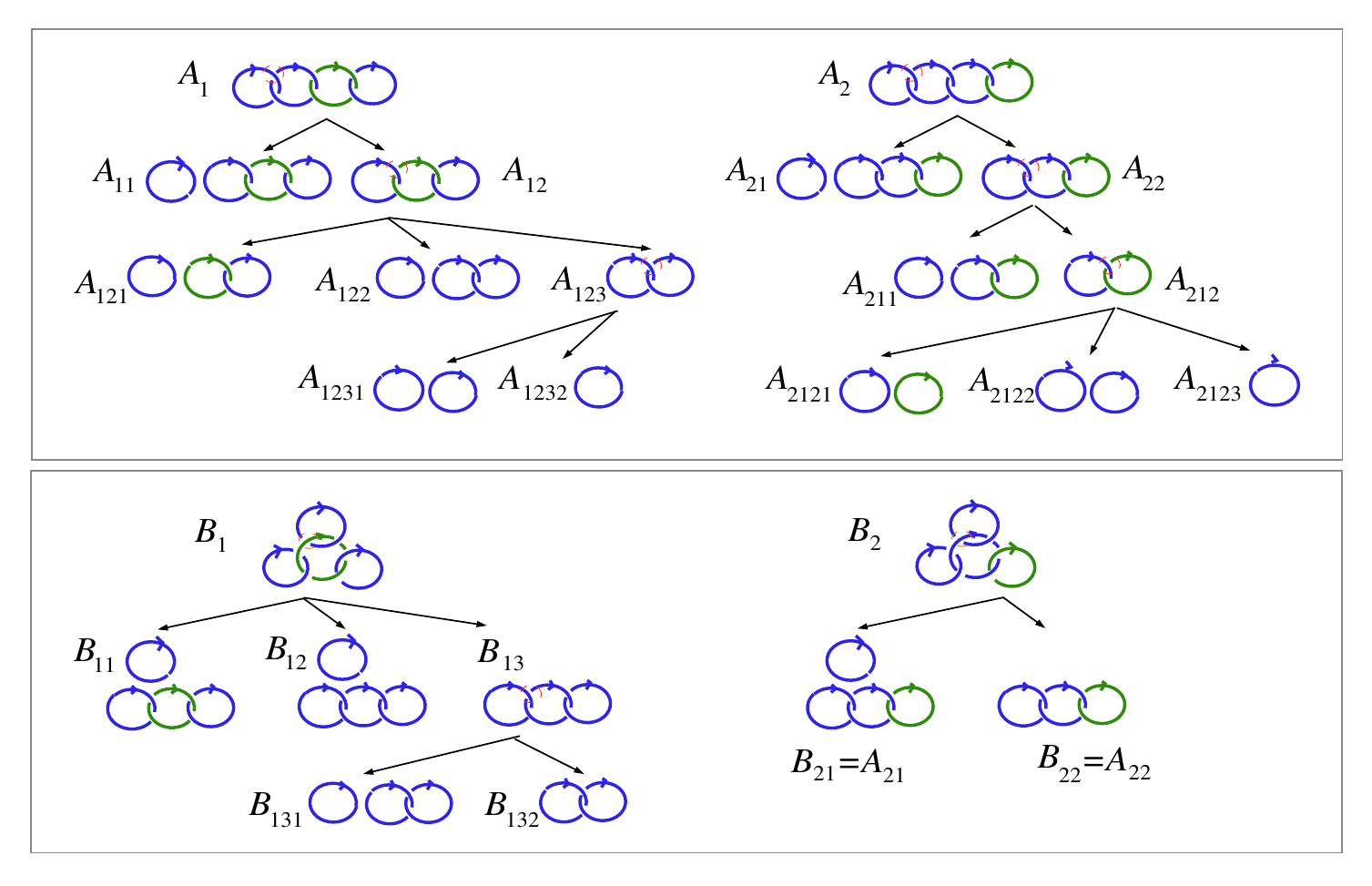}
\caption{Calculation of $\F^2_{(3,1)}(A)$ and   $\F^2_{(3,1)}(B)$}\label{Linv4}
\end{figure}

\begin{prop} The  links $A$ and  $B$  are  distinguished  by  the  invariant $\F^2_{3,1}$.
\end{prop}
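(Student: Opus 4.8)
The plan is to compute the four rational functions $F(A_1),F(A_2),F(B_1),F(B_2)$ by resolving the same distinguished crossing that was used in the one-color computation of Figure~\ref{Linv3}, and then to show that the unordered pairs $\{F(A_1),F(A_2)\}$ and $\{F(B_1),F(B_2)\}$ are different. First I would isolate precisely what changes relative to the one-color case. There, the marked crossing joined two strands carrying the \emph{same} color, so the two-term relation IV of Remark~\ref{rem2} applied and reduced both $A$ and $B$ to the common links $L_1$ and $L_2$, forcing $F(A)=F(B)$. Under a $(3,1)$ coloring this is no longer automatic: for a coloring in which the singleton component runs through the marked crossing, the two strands there carry \emph{different} colors, relation IV is unavailable, and one must instead use the three-term relation Va (the form solving for the positive crossing, since the six crossings are positive). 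The extra term $F(L_\sim)$ produced by Va merges the two colors, and it is this merging term, sitting in different positions for $A$ and for $B$, that is expected to destroy the coincidence.

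Second, I would carry out the resolution branch by branch. For the coloring whose singleton color avoids the marked crossing, relation IV gives exactly the old two-term reduction, now with an induced $(3,1)$-coloring of $L_1$ and $L_2$. For the coloring whose singleton color meets the marked crossing, relation Va gives a three-term reduction, one branch of which merges the colors. In each branch the resulting diagram is either a further copy of the base links $L_1,L_2$ carrying an induced coloring, or a split configuration whose value is read directly from the unlink formula~\eqref{formulaonc}; iterating the skein relations on the remaining crossings then expresses each $F(A_i)$ and $F(B_i)$ as an explicit element of $\mathbb{Q}(x,t,w)$, as displayed in Figure~\ref{Linv4}.

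Finally, since $\F^2_{(3,1)}$ is by definition the \emph{non-ordered} set of these values, it is not enough to produce two unequal numbers: I must verify that $\{F(A_1),F(A_2)\}\neq\{F(B_1),F(B_2)\}$, i.e.\ that no matching of the two sets consists of equalities. Concretely I would exhibit one value, say the one attached to the coloring that places the singleton color on a strand through the marked crossing, that occurs in $\F^2_{(3,1)}(A)$ but in neither entry of $\F^2_{(3,1)}(B)$.

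The main obstacle is the bookkeeping in the three-term branches: each use of Va triples the number of diagrams, and after every color-merge one must track the new color distribution to decide whether the next crossing is monochromatic (apply IV) or dichromatic (apply Va/Vb), ensuring that every branch terminates at an unlink or at a recognizable copy of $L_1,L_2$. The genuinely delicate point is then certifying that the resulting rational functions really differ as unordered sets, rather than coinciding through some identity in $x,t,w$, which is exactly what the explicit computation in Figure~\ref{Linv4} confirms.
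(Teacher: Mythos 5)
Your proposal follows essentially the same route as the paper: resolve the marked crossings by relation IV when the two strands share a color and by the three-term relation Va when the singleton color is involved, reduce each branch to unlinks or recognizable sublinks, and then compare the resulting explicit elements of $\mathbb{Q}(x,t,w)$. The paper streamlines the final unordered-set comparison by observing directly that $F(A_2)=F(B_2)$, so that exhibiting $F(A_1)\neq F(B_1)$ suffices, but this is only a minor shortcut relative to your (equally valid) plan of checking one value of $A$ against both values of $B$.
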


\begin{proof}
We  have $\F^2_{(3,1)}(A)=\{F(A_1),F(A_2)\}$  and  $\F^2_{(3,1)}(B)=\{F(B_1),F(B_2)\}.$  From  Figure  \ref{Linv4}  we  see  that  $F(B_2)=F(A_2)$.  We  calculate  now  $F(A_1)$ and  $F(B_1)$.

We  have:\\
$F(A_1)=w^2t F(A_{11})+w(t-1)F(A_{12})$,\\
$F(A_{11})=F(A_{12})y/(wx)$,   $\qquad F(A_{12})=w^2 F(A_{ 121})+w^2(t-1)F(A_{122})+w(t-1)F(A_{123})$,\\
$F(A_{121})=F(A_{212})y/(wx)$,  $\quad F(A_{122})=F(A_{123})y/(wx)$,\\
$F(A_{123})= w^2t F(A_{1231}+w(t-1) F(A_{1232}),$\\
 $F(A_{212})=w^2 F(A_{2121})+w^2(t-1)F(A_{2122})+w(t-1)F(A_{2123})$\\
$F(A_{2121})=1/(wx))$,
 $\qquad F(A_{1231}=F(A_{2122})=y/(wx)$,  $\qquad F(A_{1232})=F(A_{2123})=1$. \\
We  get \\
$F(A_1)= (1-x t^5 w^6-t-t w^2+t^2-2 x t w^2+2 x t+w^4 t^2 x-t^3 w^2+t^3 w^4
+2 x t^2 w^2 - 6 x t^3 w^2 \\
-3 x t^2+4 x t^3+2 x t^3 w^4-x t^4 w^6+3 x t^4 w^2+3 x t^5 w^4-3 x t^5 w^2
-2 x t^4+x t^5) w^3/(x (t-1)^2).$

On  the  other hand,  we  have\\
$F(B_1)= w^2 F(B_{11})+w^2(t-1)F(B_{12})+w(t-1)F(B_{13}),$\\
$F(B_{11})=F(A_{11}),$ $\quad F(B_{12})=F(B_{13}) y/(wx),$
$\quad F(B_{13})=w^2t F(B_{131})+w(t-1)F(B_{132})$,\\
$F(B_{131})=F(B_{132})y/(wx)$, $\qquad F(B_{132})=F(A_{123})$.\\
Therefore,\\
$F(B_1)= w^3(w^4 t^2+3 w^4 t^2 x-6 x t^3 w^2-3 x t^2+4 x t^3-2 x t^4-w^6 t^3 x-x t^4 w^6+3 x t^4 w^2-x t^5 w^6\\
 +3 x t^5 w^4-3 x t^5 w^2+x t^5+3 x t^3 w^4-3 x t w^2+3 x t+1-2 t w^2)/(x (t-1)^2).$

Since  $F(B_1)\not=F(A_1)$,  the  links  $A$  and $B$  are  distinguished  by  $\F^2_{(3,1)}$.
\end{proof}

\section{Non  trivial  examples}
In  this  section  we show  some pairs of links which  are  not  distinguished  by  the  homflypt polynomial,  but  that  are  distinguished  by $\F^c_p$.

The  calculation  of  $F$  by  the skein  relation  is  shown  in  the  figures  for  some  pairs. For  the  meaning of  the  arrows,  see  Remark  \ref{rem7}.  The  skein-trees  terminate  in some  simple  basic  colored links, belonging to  a list   $\L1,\L2,\dots,\L41$,  shown  in  Figure \ref{List}.  The  values  of  $F$  on  these colored links  are  given  in  Appendix 1.

The  links are  named according  to LinkInfo,  see \cite{ChaLi}.

\begin{prop}\label{p1} The invariant $\F^3_{(1,1,1)}$  distinguishes the following pairs  of links:

\[\begin{array} {|c|c|}

\hline
 L11n325 \{1,1\}   &  L11n424 \{0,0\}   \\ \hline
L11n356 \{1,0\} & L11n434\{0,0\}  \\ \hline
L10n79\{1,1\} & L10n95\{1,0\}  \\ \hline
L11a404\{1,1\} & L11a428\{0,1\} \\ \hline
L10n76\{1,1\} & L11n425\{1,0\} \\ \hline
L11n358\{1,1\} & L11n418\{1,0\} \\ \hline
\end{array}
\]

\end{prop}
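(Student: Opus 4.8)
The plan is to verify Proposition~\ref{p1} by direct computation, exactly as was done for Proposition~1 (the $A,B$ example): for each of the six pairs, compute the invariant $\F^3_{(1,1,1)}$ on both links and exhibit a value of $F$ that differs. Since $c=n=3$ here (three components, three distinct colors), the set $\F^3_{(1,1,1)}(L)$ is a singleton by Remark~\ref{rem6}, namely $\{F^3(L)\}$. So the proposition reduces to showing, for each listed pair $(L,L')$, that $F^3(L)\neq F^3(L')$ as rational functions in $\mathbb{Q}(x,t,w)$.

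First I would fix a diagram for each link (the braid words or planar diagrams from LinkInfo, \cite{ChaLi}), color all three components with distinct colors, and set up the skein tree. At each marked crossing, I apply relation III of Theorem~\ref{the1}, or equivalently relations~Va/Vb from Remark~\ref{rem2} when the two strands carry distinct colors (three children), and relation~IV when they share a color (two children); this is precisely the branching encoded by the arrows in Remark~\ref{rem7}. Resolving crossings in a good order, every branch terminates in one of the basic colored links $\L1,\dots,\L41$ of Figure~\ref{List}, whose $F$-values are tabulated in Appendix~1. Back-substituting up the tree assembles $F^3(L)$ as an explicit element of $\mathbb{Q}(x,t,w)$.

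Having the two rational functions $F^3(L)$ and $F^3(L')$ in hand, the final step is to check $F^3(L)-F^3(L')\neq 0$; it suffices to clear denominators and confirm that the resulting polynomial difference is nonzero, e.g.\ by exhibiting a single monomial of $x,t,w$ whose coefficient disagrees, or simply by evaluating at a generic numerical point. Because the homflypt polynomial fails to separate these pairs (that is the hypothesis selecting them), the two one-color values $F^1$ agree by Remark~\ref{rem4}; the whole content is that passing to three colors breaks the tie, and this is witnessed by the computed discrepancy.

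\emph{The main obstacle} is purely the bookkeeping: each link has ten or eleven crossings, so the skein trees are large, and the intermediate rational functions over $\mathbb{Q}(x,t,w)$ become unwieldy before the final simplification. The risk is arithmetic error rather than conceptual difficulty. I would mitigate this exactly as the author indicates in the Introduction: cross-check each $F^3(L)$ against the independent computation through the associated tied link $TL$, using $\FF(TL)$ and the Jones-type trace on the bt-algebra of \cite{AJtrace} (computed by the existing program), recovering $F$ from $\FF$ by the stated variable change. Agreement of the two methods certifies the values, after which the inequalities are immediate.
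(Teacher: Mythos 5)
Your proposal matches the paper's proof: the paper likewise establishes Proposition~\ref{p1} by computing $F^3$ on each link via skein trees (relations IV, Va, Vb of Remark~\ref{rem2}) terminating in the basic colored links of Appendix~1, and then observing that the resulting rational functions differ within each pair (values listed in Appendix~2, and in Appendix~4 for the pair $L10n76\{1,1\}$, $L11n425\{1,0\}$). Your cross-check via tied links and the bt-algebra trace is exactly the verification route the author describes in the Introduction, so the approaches coincide in substance.
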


\begin{proof}
These  links  have  three  components.  The values  of  $F$  are  different on the  colored   links,  obtained  by coloring  each of these  links  with  three  different  colors. In   Appendix 2, the values of $F^3$  are listed  for  the  pairs,  except  for the pair ($L10n76\{1,1\}$, $L11n425\{1,0\}$),  for  which  the  invariants  are  calculated  in  Section 4
and are shown in  Appendix 4.

\end{proof}

\begin{prop}\label{p2} The invariant $\F^2_{(2,1)}$  distinguishes  the following pairs  of links, which  are  not   distinguished by  $\F^3_{(1,1,1)}$:
\[\begin{array} {|c|c|}

\hline
 L11n358\{0,1\}  &   L11n418 \{0,0\} \\ \hline
L11a467 \{0,1\}  &   L11a527 \{0,0\} \\ \hline
 \end{array}
\]
\end{prop}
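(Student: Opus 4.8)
The plan is to proceed exactly as in the computation of $\F^2_{(3,1)}(A)$ and $\F^2_{(3,1)}(B)$ in the example of Section 2, but now with colorings of type $(2,1)$. Each link in the two pairs has three components, so a set-partition of type $(2,1)$ is determined by the choice of the singleton block; generically this yields three non-equivalent colorations (fewer only if the link has symmetries exchanging components), so that $\F^2_{(2,1)}(L)$ is an unordered set of at most three rational functions $\{F(L^{(1)}),F(L^{(2)}),F(L^{(3)})\}$. First I would fix, for each of the four links $L11n358$, $L11n418$, $L11a467$, $L11a527$, the diagram taken from LinkInfo \cite{ChaLi}, and for each admissible choice of singleton produce the corresponding two-colored diagram.

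Next, for each such colored diagram I would resolve its crossings by the skein relations of Theorem \ref{the1}: at a self-crossing between two strands of the same color I apply relation IV, while at a crossing between strands of distinct colors I apply Va or Vb according to the sign of the crossing, as described in Remark \ref{rem7}. Iterating the resolutions, every skein tree terminates in the basic colored links $\L1,\dots,\L41$ of Figure \ref{List}, whose $F$-values are tabulated in Appendix 1; substituting these values and collecting terms expresses each $F(L^{(i)})$ explicitly as a rational function in $x,w,t$. Assembling the three values for each link gives the unordered set $\F^2_{(2,1)}$, and to establish the proposition it then suffices, for each pair, to exhibit a single member of one set that does not appear in the other, i.e.\ to verify that the two unordered sets are unequal. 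The very same skein computation, run instead with all three components colored distinctly, produces the values entering $\F^3_{(1,1,1)}$, which I would display to confirm that this coarser invariant does coincide on each pair, as asserted in the statement.

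The main obstacle is not conceptual but the size and correctness of the polynomial bookkeeping: each skein tree branches several times and the resulting rational functions in three variables are lengthy, so hand computation is error-prone and the final comparison of sets is only as reliable as the individual entries. To guard against this I would cross-check every computed value $F(L^{(i)})$ against the invariant $\FF$ of the associated tied link, obtained via the Jones-type trace on the bt-algebra recalled in the Introduction (see \cite{AJTL}, \cite{AJtrace}), applying the variable change that recovers $F$ from $\FF$. Agreement of the two independent computations certifies the displayed sets; once they are certified, the inequality of $\F^2_{(2,1)}$ on each pair, and the equality of $\F^3_{(1,1,1)}$, reduce to finite comparisons of explicit polynomials.
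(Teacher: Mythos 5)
Your proposal follows essentially the same route as the paper: the paper likewise first computes $F^3$ on each pair to confirm that $\F^3_{(1,1,1)}$ coincides, then resolves the three type-$(2,1)$ colored diagrams of each link by skein trees (relation IV at same-colored crossings, Va or Vb at differently colored ones, as in Remark \ref{rem7}) terminating in the basic colored links $\L1,\dots,\L41$ of Appendix 1, and concludes by comparing the resulting unordered sets of rational functions, which are listed in Appendix 3. Even your cross-check via the tied-link invariant $\FF$ and the bt-algebra trace is exactly the verification method the author describes in the Introduction.
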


 \begin{figure}[H]
\centering
\includegraphics{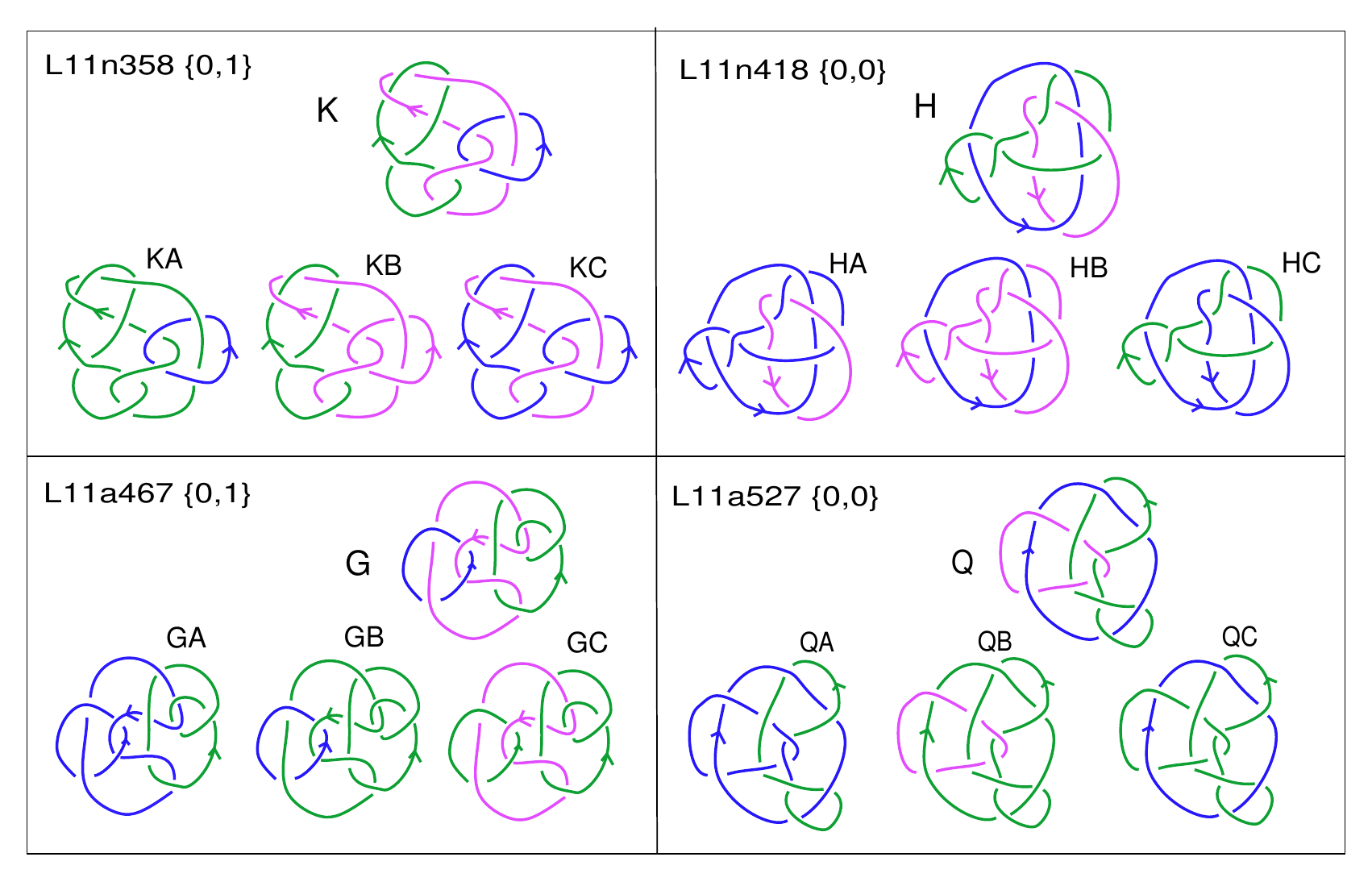}
\caption{Colored links for  the  calculation of   $\F^2_{(2,1)}$}  \label{pairs}
\end{figure}

\begin{proof}

  For  every  one of  the  above pairs, we  proceed   in  this  way: we firstly calculate  $F^3$ on  the  links  of  the  pair  (all  links  have  three components). These  values  coincide in each pair,  so  we  obtain that   $\F^3_{(1,1,1)}$  does not  distinguish  these  links.
 Then  we  calculate    $\F^2_{(2,1)}$.

We  show  here the  calculation of $\F^2_{(2,1)}$  on  the links of the pair  ($\K,\H$).

There  are  three non-equivalent  colorations  of $\K$  and  $\H$ corresponding  to  the  partitions  of type  $(2,1)$ of the  set  of  components.  The  non c-isotopic colored  links obtained  by  such  colorations  are  called  respectively $\K\A, \K\B, \K\C$,  and  $\H\A,\H\B,\H\C$, see Figure \ref{pairs}.

\begin{figure}[H]
\centering
\includegraphics{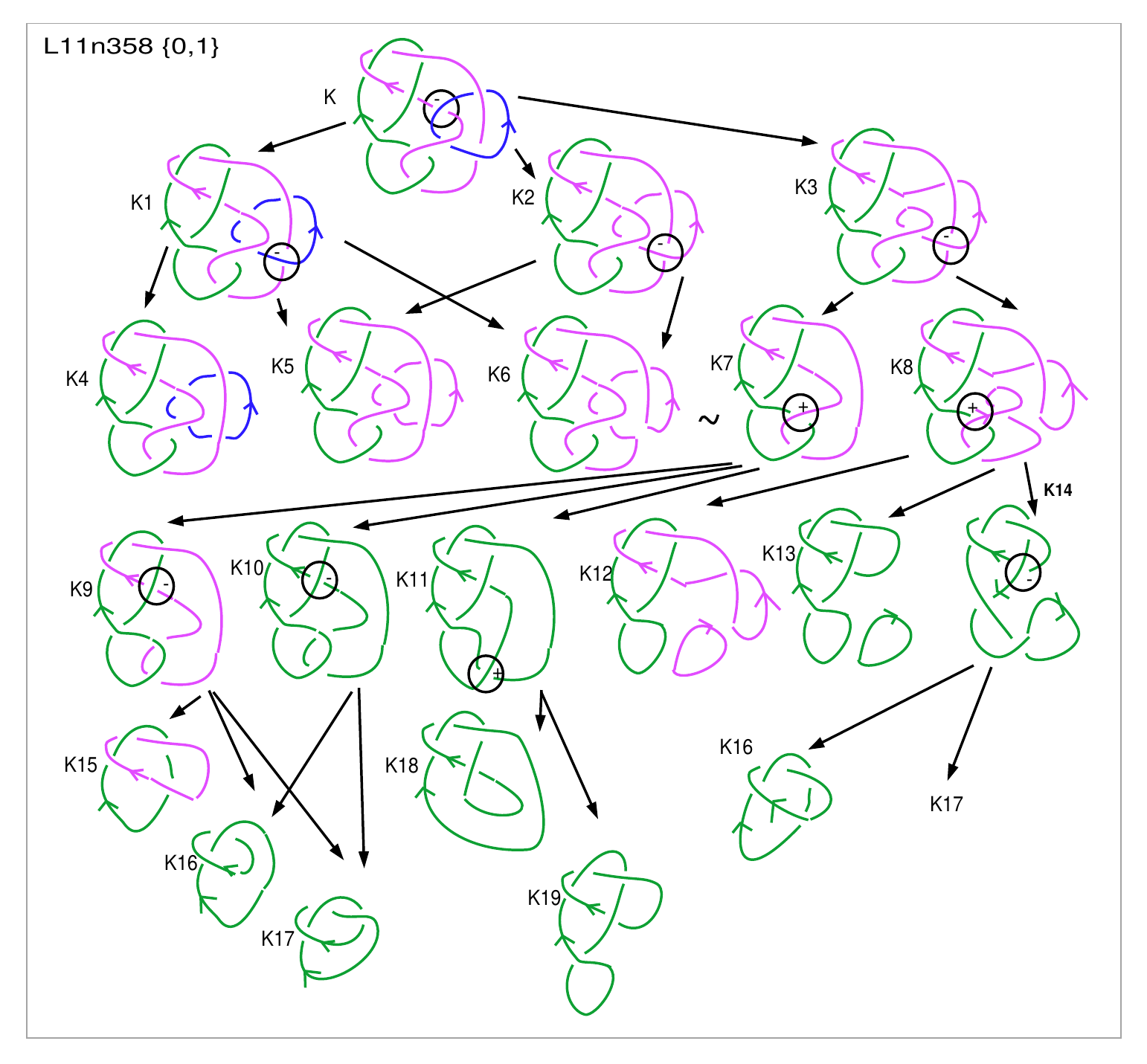}
\caption{Calculation of $F^3(\K)$}  \label{LK}
\end{figure}

\begin{figure}[H]
\centering
\includegraphics{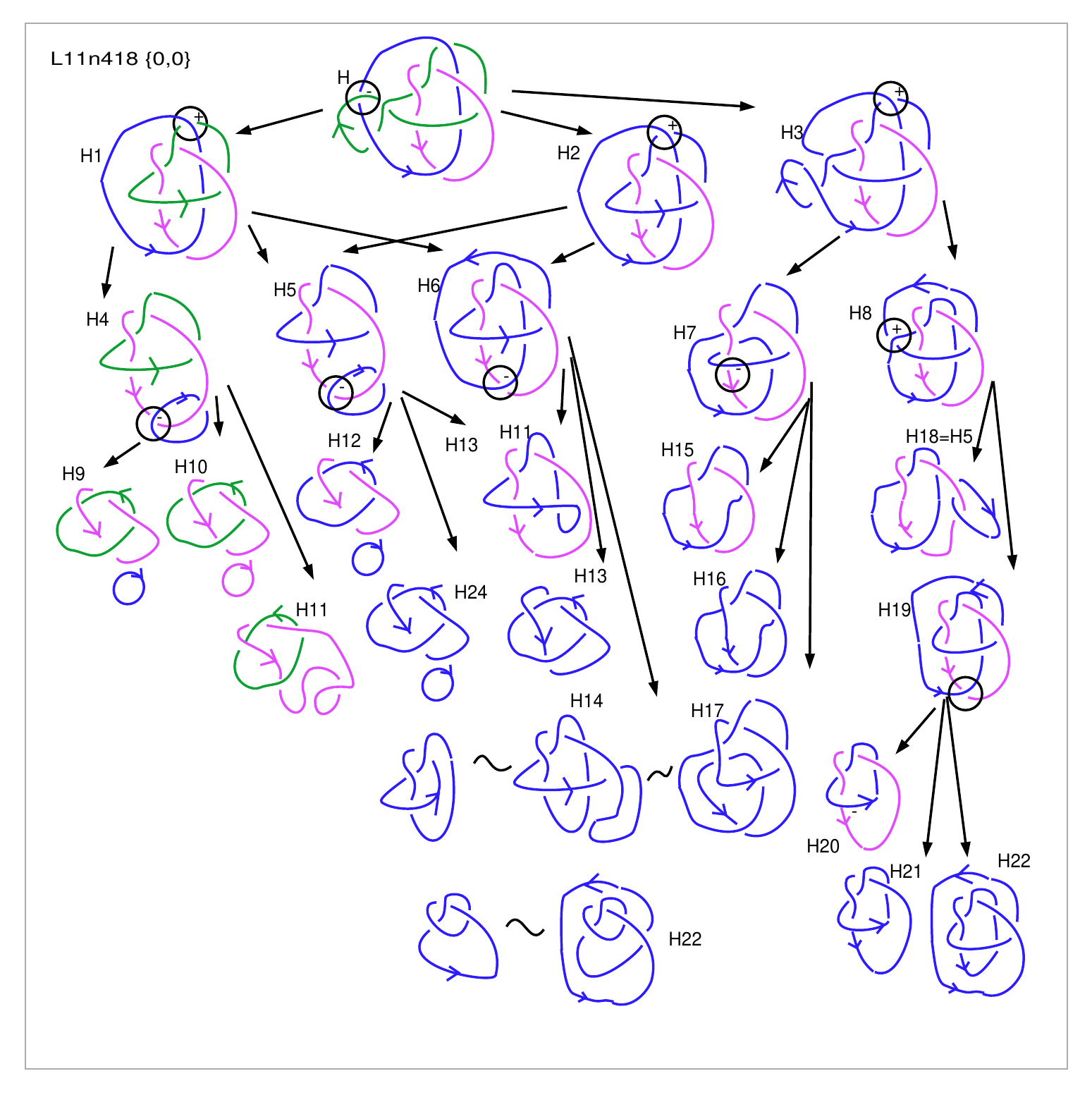}
\caption{Calculation of $F^3(\H)$} \label{LH}
\end{figure}

From Figures \ref{LK} and \ref{LH}  we obtain  the  values  $F^3(\K)=F^3(\H)$,
see  Appendix 3.

We  calculate now   $\F^2_{(2,1)}(\K)=\{ F(\K\A), F(\K\B), F(\K\C)  \}$, see Figure \ref{KABC}. Then   we  compare  this  set  of  values  with
$\F^2_{(2,1)}(\H)=\{ F(\H\A), F(\H\B), F(\H\C) \}$, obtained  from Figure \ref{HABC}.
These sets of  values, shown in   Appendix 3,
 are  different.

\end{proof}

\section{Identities}

For  a  link  with  three components, one  could define,  instead  of     $\F^2_{(2,1)}$,  the invariant  defined as the   {\sl  sum} of  the values  belonging to   the  set $\F^2_{(2,1)}$. Surprisingly,  despite  the  fact  that  the  values  are  all  different  for  the  pairs of  Proposition \ref{p2},  their  sums  coincide.  We  have indeed:

$$  F(\K\A)+ F(\K\B)+ F(\K\C)- F(\H\A)- F(\H\B) - F(\H\C)=0 $$

 $$  F(\G\A)+ F(\G\B)+ F(\G\C)- F(\Q\A)- F(\Q\B) - F(\Q\C)=0 $$

Furthermore,  I  have  found  that,  in  the  pairs  of  Proposition \ref{p2},
the links  are  distinguished  not  only  by $\F^3_{1,1,1}$,  but  by  $\F^2_{2,1}$  as  well.   Naming  $\X$ and $\Y$  the  colored links  obtained  from  a  pair  by  coloring  the  three component  with  three different colors,  and  $\X\A$,  $\X\B$, $\X\C$, $\Y\A$,  $\Y\B$, $\Y\C$,   the  colored links obtained from  that pair  by  coloring the  components  with  two different  colors,  the  corresponding  values  of $F$ satisfy:

\[ F(\X\A)+ F(\X\B)+ F(\X\C)- F(\Y\A)- F(\Y\B) - F(\Y\C)= F(\X) - F(\Y). \]

Therefore,  in  the  case  of  three  components,   we  formulate   the  following   conjecture. Observe  that  in  all  the   pairs of links here  considered, the  homflypt polynomial, and  then $\F^1_{(3)}$,  coincide.  Denote by  $\Sigma^c$  the  sum of  the  elements of  the  set  $\F^c_p$.

\begin{conjecture} Let  $L_1$ and  $L_2$ be two  links  with  three  components. If    $\Sigma^1(L_1)=\Sigma^1(L_2)$,  then
  \[  \Sigma^3 (L_1)- \Sigma^2(L_1) =  \Sigma^3 (L_2)- \Sigma^2(L_2).\]
\end{conjecture}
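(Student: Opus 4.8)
The plan is to show that, for three-component links, the quantity $\Sigma^3-\Sigma^2$ is constant on the fibres of $F^1=\Sigma^1$; that is, to exhibit $\Sigma^3(L)-\Sigma^2(L)$ as a function of $F^1(L)$ together with data that is universal for three components. It is convenient to phrase this through the partition lattice of $\CC_3=\{C_1,C_2,C_3\}$. Writing $F^3$ for the all-distinct colouring (finest partition), $F^{12|3},F^{13|2},F^{23|1}$ for the three colourings of type $(2,1)$, and $F^1$ for the monochromatic colouring (coarsest partition), one has $\Sigma^3=F^3$, $\Sigma^2=F^{12|3}+F^{13|2}+F^{23|1}$ and $\Sigma^1=F^1$. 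The combination dictated by the M\"obius function of this lattice is $M(L):=F^3(L)-\Sigma^2(L)+2F^1(L)$, and since $\Sigma^3-\Sigma^2=M-2F^1$, the conjecture is \emph{equivalent} to the assertion that $M(L)$ depends on $L$ only through $F^1(L)$.

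The engine I would use is an exact relation for the effect of merging two colours, obtained by solving relation IV for $(1-t^{-1})F(L_\sim)$ and substituting into relation III. Both hypotheses refer to the same inter-colour crossing, and after simplification all dependence on $F(L_\sim)$ disappears, leaving the clean \emph{merge-defect relation}
\[ F(L_+)-F(L_{+,\sim}) \;=\; w^{2}\,\bigl(F(L_-)-F(L_{-,\sim})\bigr). \]
The point is that no smoothing term survives: if one defines the merge-defect of a coloured diagram as the difference between its value and the value of the same diagram with the two crossing colours identified, then a crossing change at an inter-colour crossing multiplies the merge-defect by exactly $w^{\pm2}$. Consequently the merge-defect of any colouring equals $w^{2\sigma}$ times that of a diagram in which the two colour classes have been \emph{unlinked} by switching their mutual crossings, where $\sigma$ is an integer read off from the signs of the switched crossings (governed by the relevant pairwise linking number).

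I would then compute the split base cases. Once the two colour classes are separated by a sphere, relation II and the unlink values \eqref{formulaonc} reduce the merge-defect to monochromatic data, i.e.\ to values of the homflypt-type polynomial $F^1$ of the sublinks times explicit factors in $y,w,x$ (with $y$ as in \eqref{formulay}). Applying this to each of the three single-colour merges expresses the defects $F^3-F^{ij|k}$, and hence $\Sigma^2$ and $M$, in terms of three ingredients: the monochromatic values $F^1$ of the sublinks, the pairwise linking numbers entering the $w^{2\sigma}$ factors, and universal constants depending only on $n=3$. Assembling the alternating combination $M$, I would argue that the sublink contributions recombine (via the multiplicativity of Remark \ref{rem5} under connected sum) into $F^1(L)$ of the whole link, and that the genuinely colour-dependent pieces cancel, leaving $M$ a function of $F^1$ alone.

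The hard part is this last step. The factors $w^{2\sigma}$ produced by the merge-defect relation record the individual pairwise linking numbers $\mathrm{lk}(C_i,C_j)$, whereas $F^1$ sees only their sum, through the lowest-degree part of the homflypt polynomial, and not the separate values. To prove the conjecture one must show that, after forming the M\"obius combination $M$, all dependence on the separate pairwise linking numbers, and on the finer knotting of the sublinks beyond what $F^1$ records, cancels. I expect this cancellation to be the genuine obstacle: it is precisely the kind of identity the author verifies on examples but does not yet establish in general, and a failure of the cancellation for some three-component link with a prescribed $F^1$ would disprove the conjecture. A complete argument would therefore need either a closed formula for $M$ in terms of $F^1$, or a structural reason, perhaps via the trace on the bt-algebra of \cite{AJtrace}, forcing the linking-dependent terms to reassemble into $F^1$.
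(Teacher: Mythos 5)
First, be aware of what you are up against: this statement is a \emph{conjecture} in the paper, not a theorem. The author offers no proof --- only the identities for the pairs $(\K,\H)$, $(\G,\Q)$ and the example $(\R,\S)$, verified by explicit computation in Appendices 3 and 4 --- and states outright that the reasons for these identities are not evident. So there is no proof in the paper to compare yours with; your proposal must stand on its own, and it does not. Your final step --- that in the M\"obius combination $M=\Sigma^3-\Sigma^2+2F^1$ all dependence on the individual pairwise linking numbers and on the knotting of the sublinks cancels, leaving a function of $F^1$ alone --- is not a lemma you reduce the problem to; it \emph{is} the conjecture, as you yourself concede. Passing through the M\"obius function of the partition lattice of $\{C_1,C_2,C_3\}$ is a trivially equivalent rephrasing and buys nothing.

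The portion of your argument that is meant to do real work also has a concrete flaw. Your merge-defect relation itself is correct and worth recording: eliminating $F(L_\sim)$ between relations III and IV gives
\[ \tfrac{1}{w}F(L_+)-wF(L_-)=\tfrac{1}{w}F(L_{+,\sim})-wF(L_{-,\sim}),\qquad\text{hence}\qquad F(L_+)-F(L_{+,\sim})=w^2\bigl(F(L_-)-F(L_{-,\sim})\bigr), \]
an identity that does not appear explicitly in the paper (relations Va and Vb are the variants eliminating $F(L_{+,\sim})$ instead). But relation III merges precisely the two colours of the strands meeting in the disc, so the defect $F^3-F^{ij|k}$ can be propagated only by switching crossings between $C_i$ and $C_j$; crossings involving $C_k$ give no information about this defect. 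Such switches can never bring $C_i$ and $C_j$ into split position when $C_k$ is linked with both: for the three-component chain ($C_k$ the middle component), any link obtained from it by switching $C_i$--$C_j$ crossings still has $\mathrm{lk}(C_i,C_k)=\pm1$ and $\mathrm{lk}(C_j,C_k)=\pm1$, and any sphere separating $C_i$ from $C_j$ would force one of these linking numbers to vanish. So for exactly the class of links the conjecture concerns, your induction has no reachable base case, and the claimed reduction of the defects to monochromatic data times powers $w^{2\,\mathrm{lk}(C_i,C_j)}$ is unestablished --- the computed values $F(\K\A),F(\K\B),\dots$ in Appendix 3 do not display any such simple structure. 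Two further, smaller gaps: the split-union evaluation you invoke is nowhere in the paper (relation II only adds an unknotted circle, and Remark \ref{rem5} concerns connected sums, not split unions), and when the link has symmetries the set $\F^2_{(2,1)}$ has fewer than three elements, so $\Sigma^2$ need not equal the three-term sum $F^{12|3}+F^{13|2}+F^{23|1}$ on which your lattice bookkeeping rests.
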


The  reasons  of  the above identities  are  not  evident to me  and deserve    further investigations. Also,  the  invariants  $\F^3_p$   have to  be  calculated  on  all  pairs of  non isotopic  links   with  the  same  value of  $F^1$.

{\bf Example.}       In  Figures \ref{RS}, \ref{RABC} and \ref{SABC}   the  trees for the calculations  of $\F^3_{(1,1,1)}$  and  $\F^2_{(2,1)}$ on  the  links  $\R=L10n76\{1,1\}$ and  $\S=L11n425\{1,0\}$ are  shown.
Observe  that  the  values of  $F$ on  some  leaves of  the  skein-trees for  the  colored  links  obtained  from $\R$ and  $\S$,   are  obtained by using   Remark \ref{rem3},  e.g.
 $\R1=\L39/(wz)$,   $\R2=\L35/(wx)$.  Also,   Remark \ref{rem5} is used  to  calculate some  values of $F$,  e.g.
$F(\S4)= F(\L39)F(\L5)$,  $F(\S\B5)=F(\L35)F(\L4)$.

The values  of  $\F^3$  and  $\F^2_{(2,1)}$  are  shown  in  Appendix 4.

{\bf Acknowledgement.}  A particular  thank  to  Konstantinos  Karvounis  for  having providing  me with the  list of pairs of non  isotopic links that  are  not  distinguished  by  the  homflypt polynomial.

\begin{figure}[H]
\centering
\includegraphics{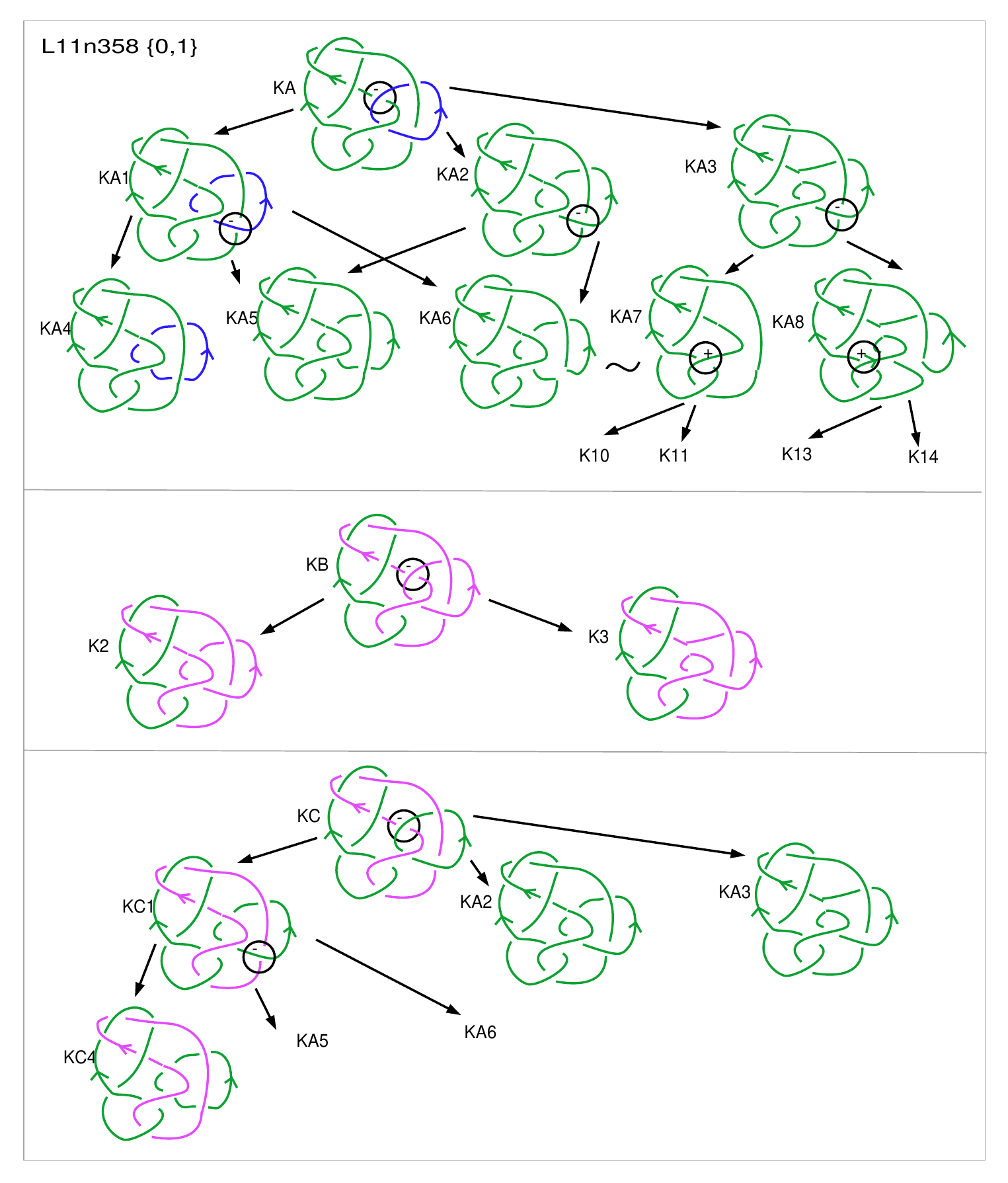}
\caption{Calculation of $F(\K\A)$, $F(\K\B)$ and $F(\K\C)$ }\label{KABC}
\end{figure}

\begin{figure}[H]
\centering
\includegraphics{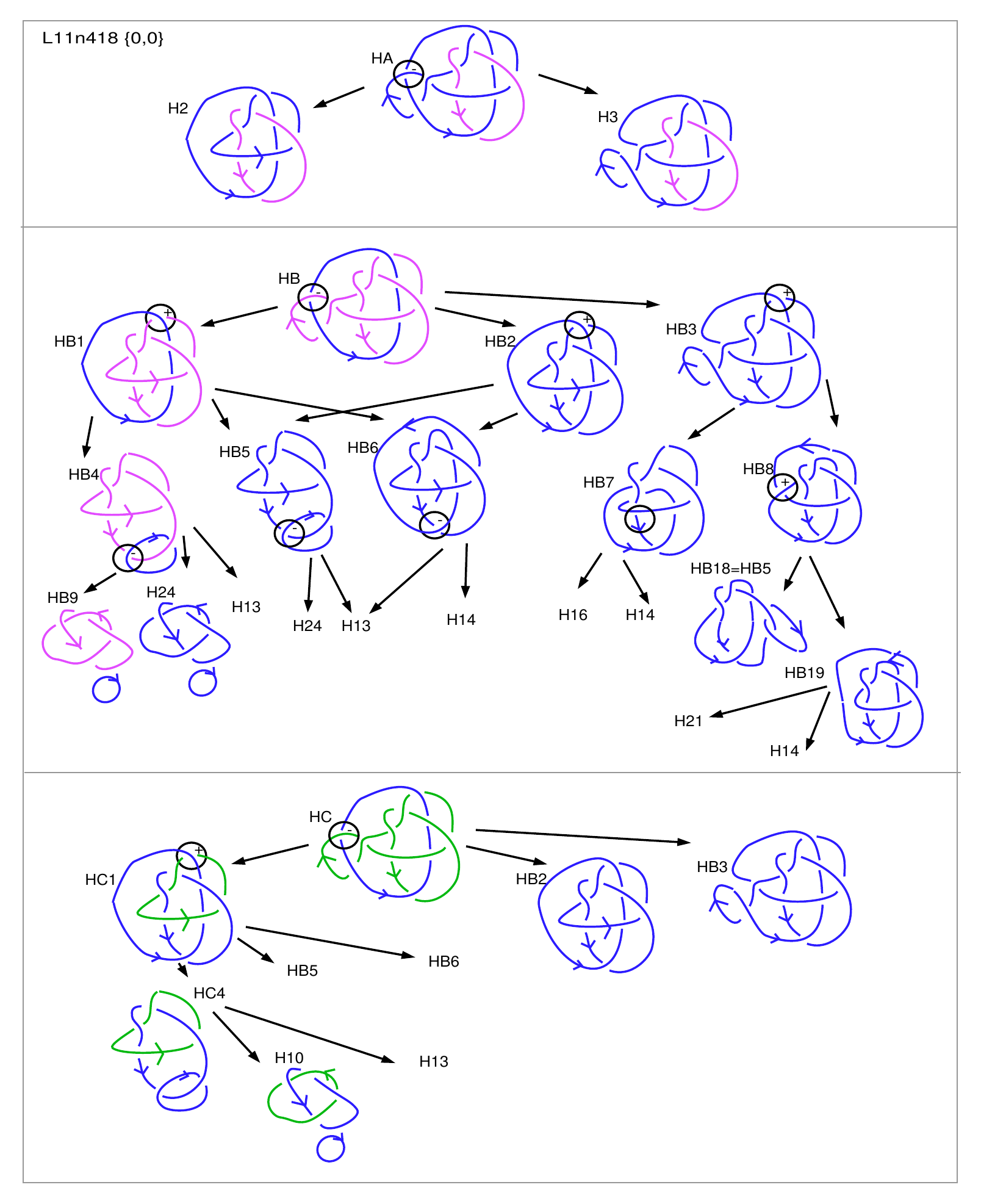}
\caption{Calculation of $F(\H\A)$, $F(\H\B)$ and $F(\H\C)$   }\label{HABC}
\end{figure}

\begin{figure}[H]
\centering
\includegraphics{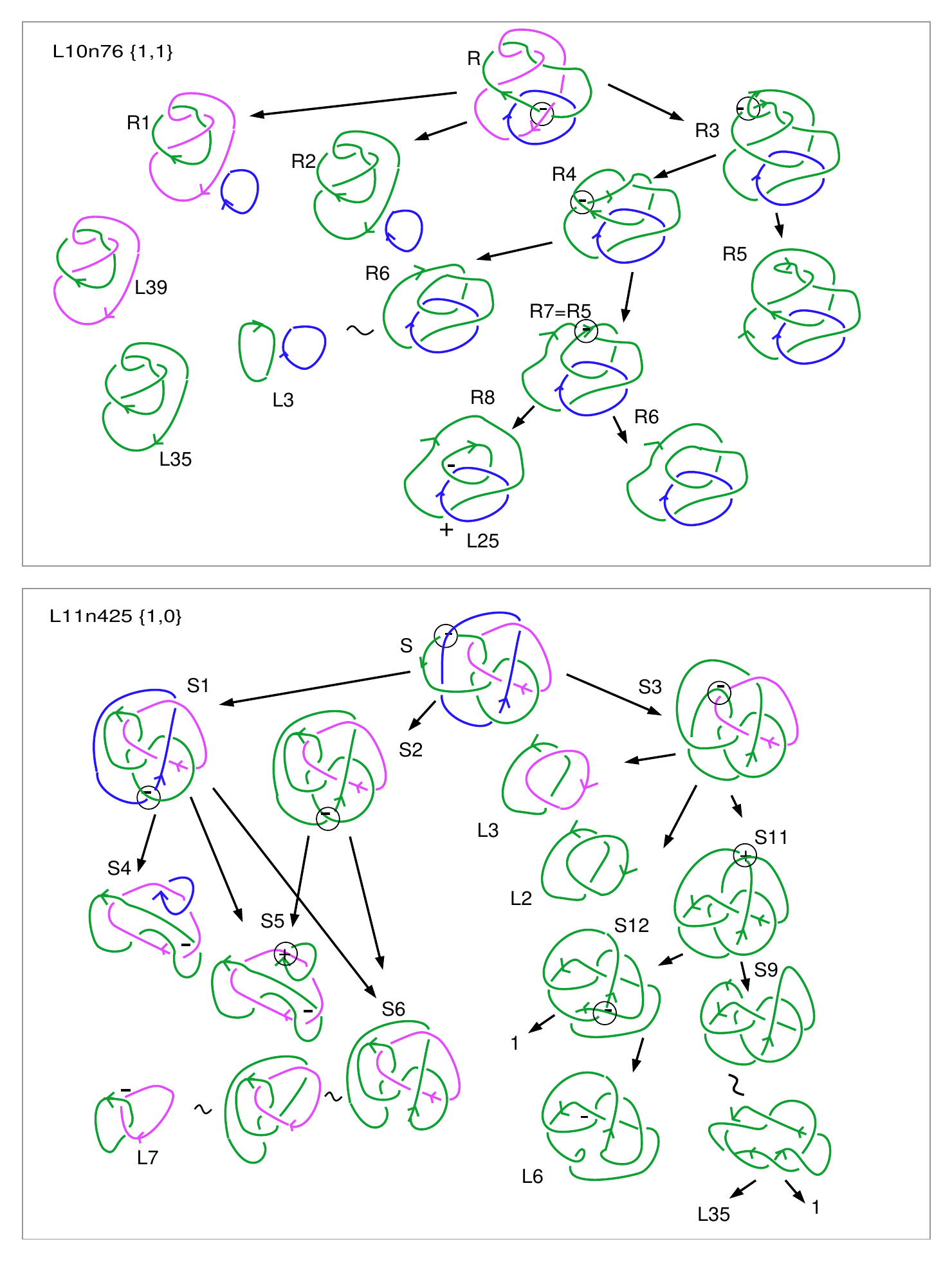}
\caption{Calculation of $F^3$ on the  links  $\R$  and  $\S$ }\label{RS}
\end{figure}

\begin{figure}[H]
\centering
\includegraphics{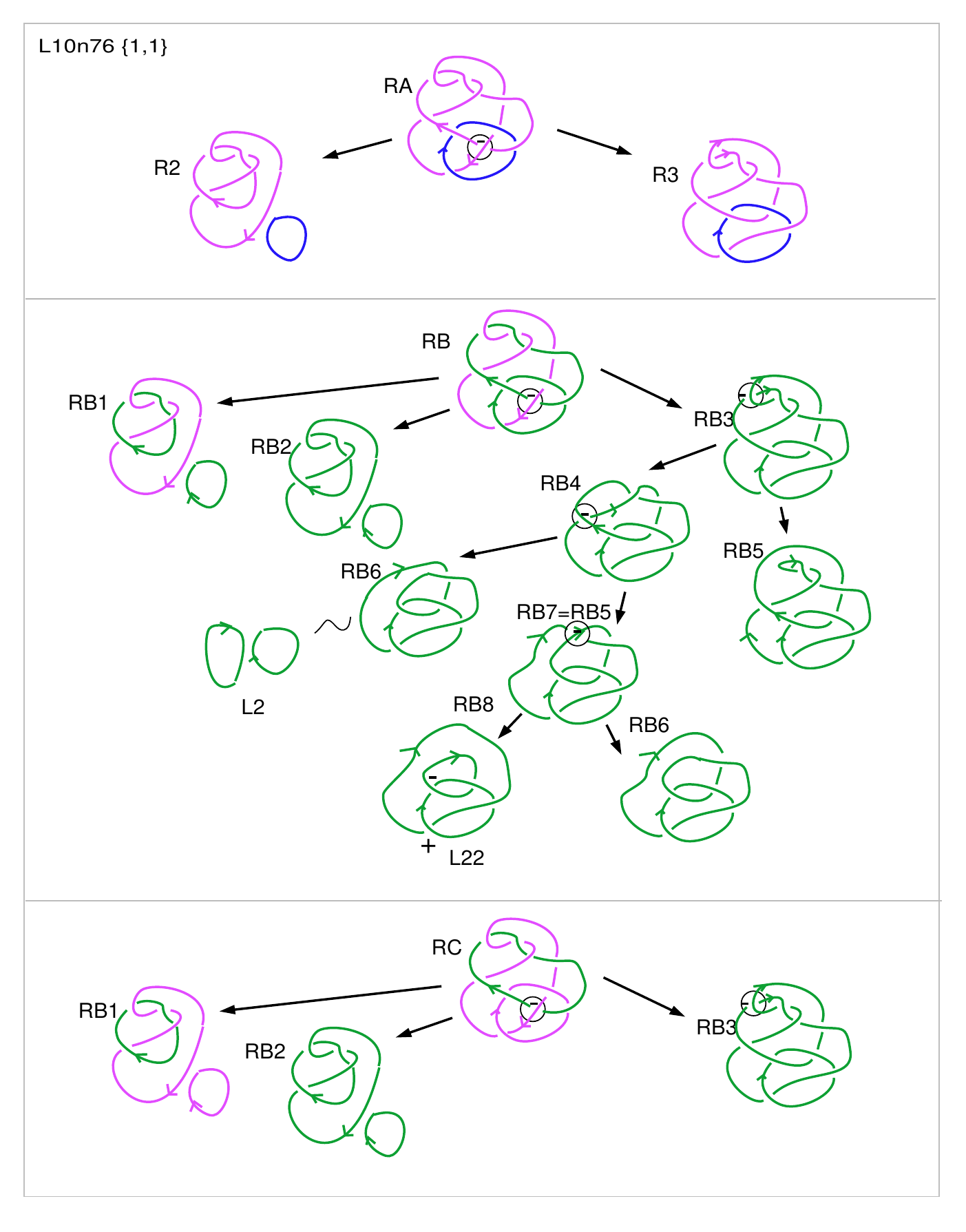}
\caption{Calculation of $\F^2_{(2,1)}$ on the  link   $\R$  }\label{RABC}
\end{figure}
\begin{figure}[H]
\centering
\includegraphics{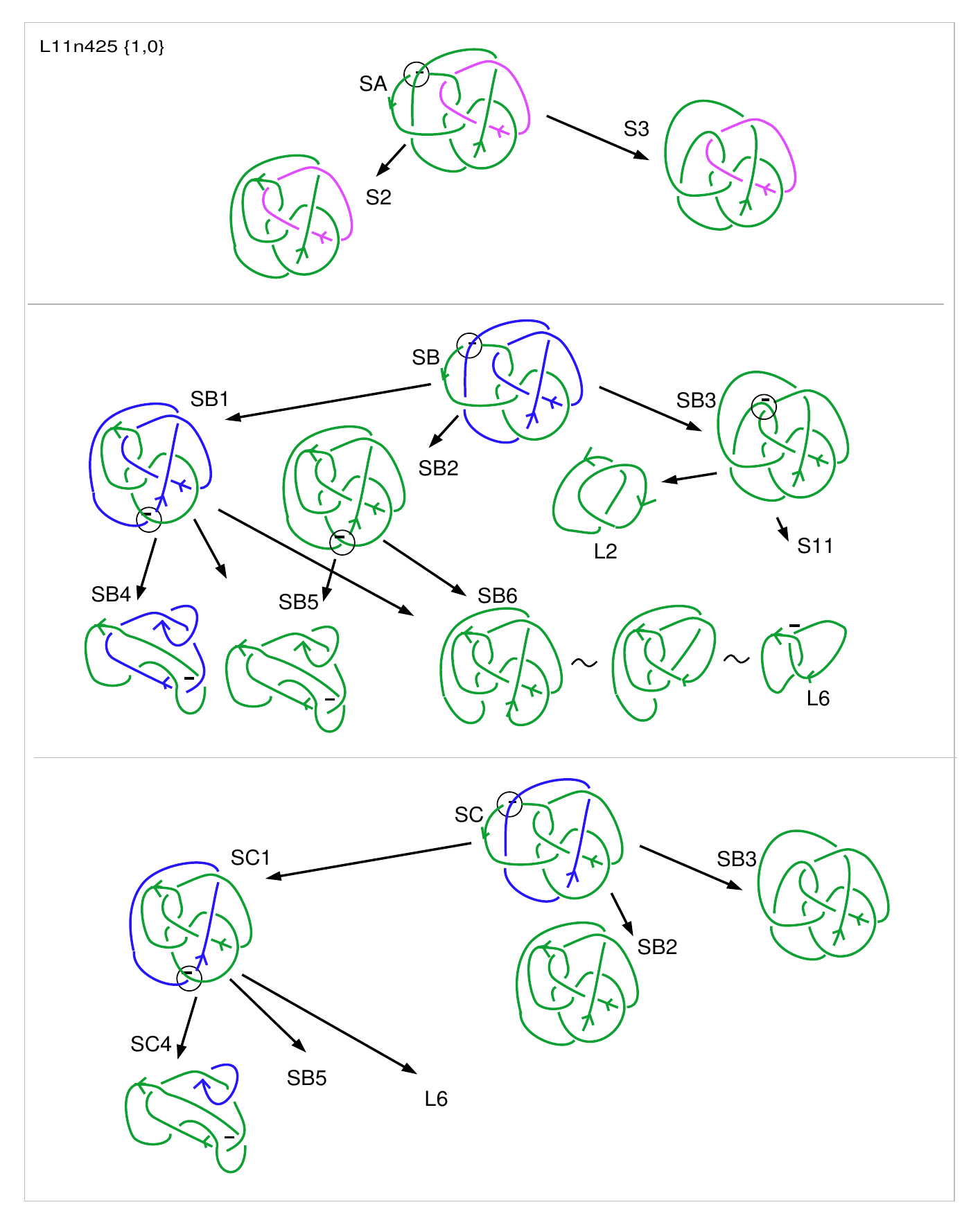}
\caption{Calculation of $\F^2_{(2,1)}$  on the  link   $\S$ }\label{SABC}
\end{figure}

\section*{Appendix 1:  List of values of $F$ on simple colored  links }
\begin{figure}[H]
\centering
\includegraphics{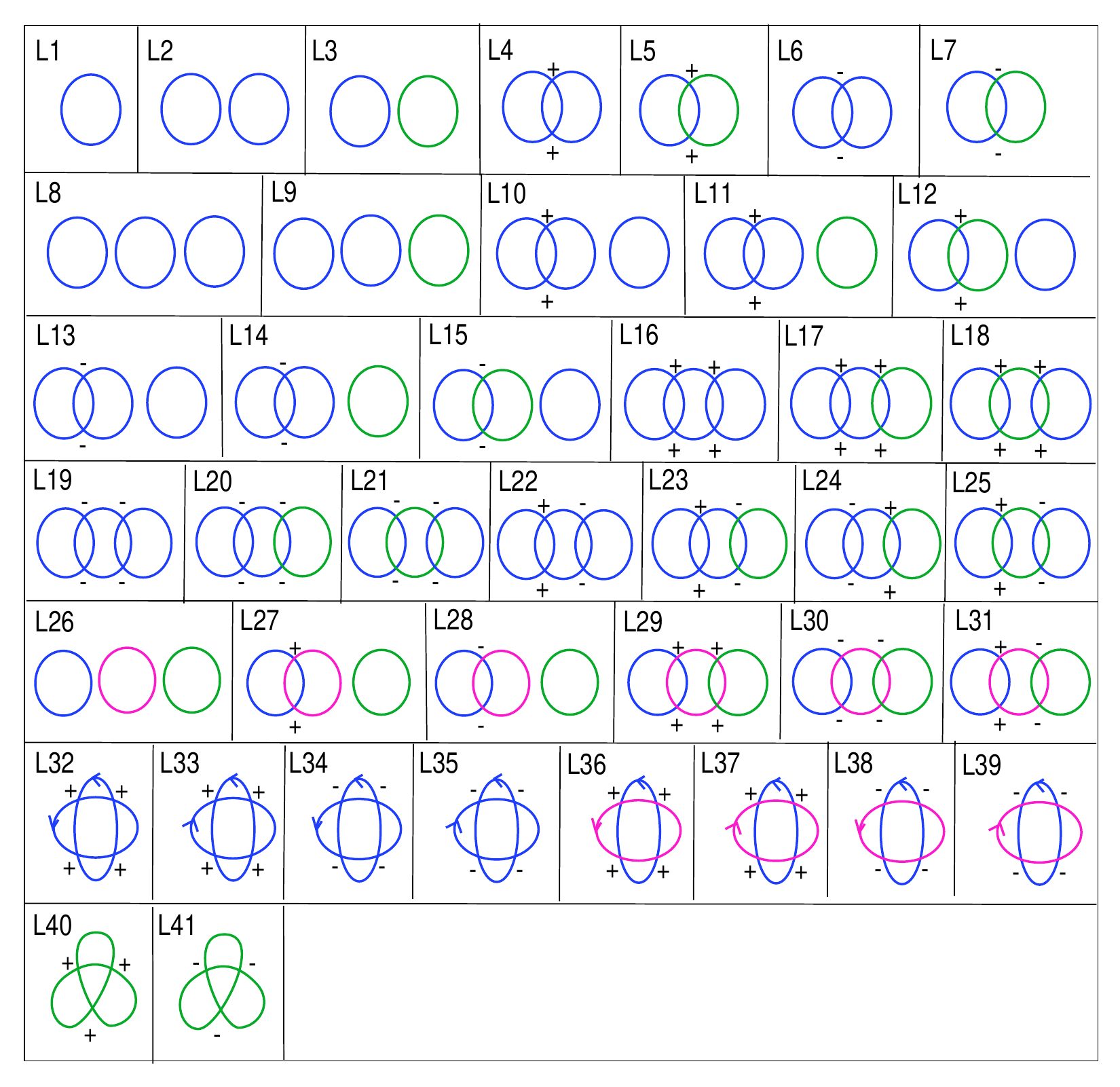}
\caption{Table of  simple colored links }\label{List}
\end{figure}
\vfill \newpage

\[ \begin{array} {|l|c|l|c|}
   \hline
       & F &   & F \\ \hline
   \L1 & 1 & \L2 & \frac{ tw^2-1}{(1-t)w} \\ \hline
   \L3 & \frac {1}{w x} & \L4 &  \frac{ w(t^2 w^2+t-t^2-1)}{ 1-t } \\ \hline
   \L5 &  \frac{(1-x t w^2+x t)w}{x} & \L6 &  \frac{t w^2+1-t^2 w^2-w^2}{t w^3 (t-1)} \\ \hline
   \L7 &  \frac{t-x+x w^2}{t w^3 x} & \L8 &  \frac{ (t w^2-1)^2}{ (1-t)^2 w^2}  \\ \hline
   \L9 &\frac{t w^2-1}{(1-t) w^2 x} & \L10 &  \frac{ (t^2 w^2+t-t^2-1) (t w^2-1)}{  (t-1)^2} \\  \hline
    \L11  &   \frac{t^2 w^2+t-t^2-1}{(1-t) x} & \L12 &  \frac{   (1-x t w^2+ x t) (t w^2-1)}{ x (1-t)} \\  \hline
    \L13  & \frac{  (t w^2+1-t^2 w^2-w^2)(1-t w^2)}{t w^4(t-1)^2}& \L14 & \frac{ t w^2+1-t^2 w^2-w^2}{  t w^4(t-1) x} \\  \hline
    \L15  & \frac{(t-x+x w^2) (t w^2-1)}{  t w^4 x (1-t)} &\L16 & \frac{w^2 (t^2 w^2+t-t^2-1)^2}{(t-1)^2} \\  \hline
    \L17  & \frac{ w^2 ( x t w^2-x t-1) (t^2 w^2+t-t^2-1)}{x (t-1)} &\L18 & \frac{w^2 (1-t w^2 +x t^2 w^4-2 x t w^2+2 x t
     +x t^3 w^4-2 x t^3 w^2-x t^2+x t^3)}{ x (t-1)} \\  \hline
    \L19  & \frac{(-t w^2-1+t^2 w^2+w^2)^2}{ (t^2 w^6 (t-1)^2} &\L20 &  \frac{ (t-x+x w^2) ( t w^2+1-t^2 w^2-w^2)}{ t^2 w^6 x (t-1)}\\  \hline
     \L21 &\frac {t^3 w^2-t^2-2 x t^2 w^2+x t+2 x t^2 w^4+x-x w^4 t-2 x w^2+x w^4}{ t^2 w^6 x (1-t)} & \L22 & \frac{ (-t w^2-1+t^2 w^2+w^2) (t^2 w^2+t-t^2-1)}{ (t-1)^2 w^2 t}\\  \hline
    \L23  &  \frac{(t-x+x w^2) (t^2 w^2+t-t^2-1)}{ w^2 x (1-t) t}&\L24 &\frac{ ( x t w^2-x t-1) (-t w^2-1+t^2 w^2+w^2)}{  w^2 x (t-1) t } \\  \hline
     \L25 & \frac{-t^2 w^2+t-x t w^2-x+x w^2-x t^2 w^4+x t^3 w^4+x t^2 w^2+x t-x t^3 w^2}{ w^2 x (t-1) t} &
    \L26  & \frac{1}{ w^2  x^2}  \\ \hline
\L27 &  \frac{ 1-x t w^2+ xt}{x^2} & \L28&  \frac{t-x+x w^2}{t w^4 x^2} \\ \hline
 \L29 & \frac{( x t w^2-x t-1)^2*w^2}{x^2} & \L30 &  \frac{(t-x+x w^2)^2}{ x^2 t^2 w^6}\\ \hline
   \L31 &  \frac{ ( 1-x t w^2+ x t) (t-x+x w^2)}{ w^2 t x^2} & \L32 &     \frac{ w^3(t^3 w^2+t^2-t^3-t-t^4 w^2+t^4-t^2 w^2+1)}{t-1}\\ \hline
   \L33 &   \frac{-w (t^3 w^4+t^2 w^2-t^3 w^2-t w^2-t^2+2 t-1)}{t-1}  &  \L34 & \frac{-(t^2 w^2+t-t^3 w^2-t w^2-t^2+t^4 w^2-1+w^2)}{ t^3 w^5 (t-1)} \\ \hline
     \L35 & \frac{t w^2+1-t^2 w^2-w^2-t^3 w^4+2 t^2 w^4-t w^4}{ t^2 w^5 (t-1)} & \L36 & \frac{ w^3 ( 1-x t w^2+x t-x t^3 w^2+x t^3)}{x} \\ \hline
 \L37 & \frac{w ( w^2-x w^4 t-x t^2 w^4+x t^2 w^2+x w^2+x t-x)}{x} & \L38 & \frac{t^3-x t^2+x t^2 w^2-x+x w^2}{x w^5 t^3} \\ \hline
   \L39 &   \frac{t^2-x t-x+x t^2 w^2+x w^2-x t^2 w^4+x w^4 t}{x w^5 t^2}  & \L40 &  t^2 w^2+w^2 -t^2 w^4 \\ \hline
   \L41 &   \frac{ t^2 w^2+w^2-1}{t^2 w^4} &  &  \\ \hline
 \end{array} \]
\vfill \newpage
\section*{Appendix 2   }

\[\begin{array} {| l|}
\hline
F^3( L11n325 \{1,1\})=
\\ (-x t^6 w^4+x^2 t^6 w^6+x t^5 w^4-x^2 t^4-x t w^2-3 x^2 t w^2+6 x^2 t^2 w^2-8 x^2 t^3 w^2
+6 x^2 t^4 w^2-3 x^2 t^5 w^2+\\ x^2 t^6 w^2+x t^3 w^4+x t^2 w^2-2 x t^3 w^2-2 x^2 t^2 w^4
+5 x^2 t^3 w^4-6 x^2 t^4 w^4+5 x^2 t^5 w^4-2 x^2 t^6 w^4-x^2+\\ x t+  x t^4 w^2-x t^5 w^2 +
  x t^6 w^2-x t^2-4 x^2 t^2+3 x^2 t-2 x^2 t^5 w^6+x^2 t^4 w^6+  x^2 w^2+3 x^2 t^3+t^5 w^2 +\\ x t^3-x t^4)/(w^6 x^2 t^5) \\
  F^3( L11n424 \{0,0\})= \\-(2 x t^6 w^4-2 x t^5 w^4+x^2 t^4+2 x^2 t w^2-7 x^2 t^2 w^2+8 x^2 t^3 w^2
   -5 x^2 t^4 w^2+4 x^2 t^5 w^2-x^2 t^6 w^2+\\2 x t^3 w^2+  3 x^2 t^2 w^4-4 x^2 t^3 w^4
   +6 x^2 t^4 w^4-6 x^2 t^5 w^4+x^2 t^6 w^4+x^2-x t^4 w^2+2 x t^5 w^2-2 x t^6 w^2+
  \\  4 x^2 t^2- 2 x^2 t-x t^4 w^4+2 x^2 t^5 w^6-2 x^2 t^4 w^6-x^2 w^2-4 x^2 t^3
   -t^5 w^2-2 x t^3+2 x t^4)/(w^6 x^2 t^5)  \\ \hline  \hline
F^3(L11n356 \{1,0\})= \\
(-t^2 x^2+2 t x^2-x t^5 w^4+x t^3 w^2-x^2+8 t^3 x^2 w^4-x^2 t^6 w^4+t^2 w^2 +x^2 t^6 w^6-4 t^2 x^2 w^4-x t^3 w^4-\\5 t^3 x^2 w^2-3 x^2 t^5 w^6-4 x^2 t^4 w^4 +x^2 t^4 w^2+3 x^2 t^5 w^4+2 x^2 w^2+x t^5 w^2+5 x^2 t^2 w^2-5 x^2 t w^2-\\x^2 w^4
-x t w^2+x t w^4-3 x^2 t^3 w^6+3 t x^2 w^4+3 x^2 t^4 w^6)/(w^2 x^2 t^2)\\
F^3(L11n434\{0,0\})=\\
 (x t^3-2 t^2 x^2+2 t x^2-x t+x+x t^3 w^2+8 t^3 x^2 w^4+x t^4 w^2-x^2 t^6 w^4-x t^4 w^4
+t^2 w^2+x^2 t^6 w^6-\\3 t^2 x^2 w^4- 2 x t^3 w^4-4 t^3 x^2 w^2-2 x^2 t^5 w^6-5 x^2 t^4 w^4
+2 x^2 t^4 w^2+2 x^2 t^5 w^4+x^2 w^2-x t^2+\\5 x^2 t^2 w^2-x w^2- 6 x^2 t w^2-x^2 w^4
+x t w^4-4 x^2 t^3 w^6+x t^2 w^4+4 t x^2 w^4+3 x^2 t^4 w^6)/(w^2 x^2 t^2) \\ \hline   \hline
F^3(L10n79\{1,1\})= \\
 -(-2 x^2 t^2 w^4+x^2 t^3 w^4+3 t^4 w^2 x^2-2 x^2 t^4 w^4+2 x^2 t^3-3 w^2 x^2 t^3
+t^7 x^2 w^4-x^2 t^4+x^2 t^6+\\t^7 x w^2+x^2 t^5-2 t^5 x w^2+2 t^5 x-t^7 x-x^2 t w^2
-x^2 t^7 w^2+x^2 w^2-3 x^2 t^5 w^2-x^2-t^6+2 x^2 t^5 w^4+\\5 w^2 x^2 t^2-2 x t^3 w^2
+2 t^3 x-3 x^2 t^2+x^2 t-x^2 t^6 w^2)/(w^8 x^2 t^6) \\
F^3(L10n95\{1,0\})= \\
(t^6-3 w^2 x^2 t^2+x^2 t^2 w^4-2 x^2 t+2 x^2 t w^2-2 x^2 t^5 w^4+2 x^2 t^2+x^2 t^4
+t^5 x w^2-2 x^2 t^3+x t^3 w^2-\\t^3 x-t^5 x+2 x^2 t^4 w^4+2 x^2 t^5 w^2+4 w^2 x^2 t^3
+x^2 t^6 w^4-2 x^2 t^3 w^4-x^2 w^2-x^2 t^6 w^2+x t w^2-\\ x t+x^2-3 t^4 w^2 x^2)/(w^8 x^2 t^6) \\ \hline    \hline
F^3(L11a404\{1,1\})=\\
-(x t^2 w^4-x w^2+x-2 x t^6 w^4-2 x^2 t^7 w^2+x^2 t^8 w^2+4 x^2 t^7 w^4-2 x^2 t^8 w^4
+2 x^2 t^6 w^6-2 x^2 t^7 w^6+\\x^2 t^8 w^6-4 x^2 t^4+x t w^2-3 x^2 t w^2+4 x^2 t^2 w^2
-15 x^2 t^3 w^2+9 x^2 t^4 w^2-15 x^2 t^5 w^2+6 x^2 t^6 w^2-\\x t^3 w^4- 4 x t^2 w^2
+x t^3 w^2-t^5-x^2 t^2 w^4+8 x^2 t^3 w^4-6 x^2 t^4 w^4+ 15 x^2 t^5 w^4-7 x^2 t^6 w^4
-\\x t-x t^4 w^2+ 4 x t^6 w^2-t^3+3 x t^2-3 x^2 t^2+2 x^2 t+2 x t^4 w^4-x^2 t^3 w^6\
+x^2 t w^4-4 x^2 t^5 w^6 + \\x^2 t^4 w^6+8 x^2 t^3+t^5 w^2+ 4 x^2 t^5-x^2 t^6
-2 x t^6-x t^4)/(x^2 t^3) \\
 F^3(L11a428 \{0,1\})=\\
  -(-t^6 x-3 x^2 t^4+t^4 x+4 x^2 t^5+7 x^2 t^3-t^3+t^5 x w^2-x^2+t^5 w^2-4 x^2 t^2+2 t^2 x+9 x^2 t^4 w^2+2 x^2 t+\\
  x^2 t^7+6 x^2 t^3 w^4-t^8 x+x^2 w^2-t^5-x^2 t^8 w^4-2 x^2 t^2 w^4-7 x^2 t^4 w^4-16 x^2 t^5 w^2-3 x^2 t^7 w^6+\\
  4 x^2 t^6 w^2+2 x^2 t^6 w^6+6 x^2 t^2 w^2+2 t^6 x w^2-x t^6 w^4+x^2 t^8 w^6+x^2 t^4 w^6-6 x^2 t^6 w^4+2 x t^4 w^4-\\
  13 x^2 t^3 w^2+16 x^2 t^5 w^4-4 x^2 t^7 w^2-4 x^2 t^5 w^6-t^5 x+6 x^2 t^7 w^4-3 t^4 x w^2+t^8 x w^2-t^7 x w^4-\\
  2 x^2 t w^2+t^7 x w^2-2 x t^2 w^2)/(x^2 t^3) \\ \hline \hline
  F^3(L11n358\{1,1\})= \\
 (-x^2 t+x^2 t^2-x^2 t^3+x^2 t^4+2 x^2 t w^2-2 x^2 t^2 w^2+2 x^2 t^2 w^4-x^2 t w^4
+3 x^2 t^3 w^2-3 x^2 t^4 w^2-\\4 x^2 t^3 w^4+2 x^2 t^3 w^6+2 x^2 t^4 w^4-x^2 t^2 w^6
-4 x^2 t^5 w^4+x^2 t^4 w^8+x^2 t^5 w^6+3 x^2 t^5 w^2-\\x^2 t^4 w^6-t^6 w^8 x^2
+2 t^6 w^4 x^2-t^6 w^2 x^2+t^7 w^6 x^2-t^7 w^4 x^2-x+t w^2+2 x t^3 w^4+x t
-\\2 x t w^2+x w^2+4 x t^2 w^2-4 x t^2 w^4-x t^3 w^2+x w^4 t+x w^2 t^4-w^6 x t^3
+x w^6 t^4-2 x w^4 t^4) w^2/(x^2 t) \\
F^3(L11n418\{1,0\})= \\
  (-2 x^2 t+x^2 t^2-x^2 t^3+x^2 t^4+3 x^2 t w^2-2 x^2 t^2 w^2+2 x^2 t^2 w^4
+4 x^2 t^3 w^2-3 x^2 t^4 w^2-5 x^2 t^3 w^4+\\ x^2 t^3 w^6+x^2 t^4 w^4-x^2 t^2 w^6
-4 x^2 t^5 w^4+x^2 t^5 w^6+3 x^2 t^5 w^2+x^2 t^4 w^6+ x^2-t^6 w^8 x^2 \\
+2 t^6 w^4 x^2-t^6 w^2 x^2-2 x^2 w^2+t^7 w^6 x^2-t^7 w^4 x^2+w^4 x^2-w^6 x t^2
+t w^2-2 x t^3 w^4-x t+2 x t w^2-\\ x w^2+x t^2+x t^2 w^2-x t^2 w^4+x t^3 w^2+x w^4
-x w^4 t+w^6 x t^3+ w^8 x^2 t^3-  w^6 x^2 t) w^2/(x^2 t)
\\ \hline   
\end{array}
\]

\section*{Appendix 3   }

\centerline{  $\F^3_{(1,1,1)}( L11n358\{0,1\})=\F^3_{(1,1,1)}( L11n418\{0,0\} ) $  }

$F(\K)=F(\H)= (-t x^2 w^4+x^2 t^5 w^2+x t^2 w^2+t^3 x^2+t^5-3 t^4 x-
x w^4 t^6-t^2 w^6 x^2-
t^4 x^2+t^5 x+x^2 t^7 w^6-x^2 t^7 w^4+x t^6 w^2+x w^4 t^7-x t^7 w^2+t^2 x^2-
2 x^2 w^2-x t^2+x^2 w^4+2 x^2 t^3 w^6+x^2 t^6 w^2-3 x^2 t^4 w^6+x^2 t w^2-
5 x^2 t^2 w^2+5 x^2 t^4 w^4+2 x^2 t^5 w^6+x^2+t^3 x^2 w^2+5 t^2 x^2 w^4-
4 t^3 x^2 w^4-x^2 t^6 w^6+3 x t^4 w^2-x^2 t^4 w^2-x t^5 w^2-
3 x^2 t^5 w^4)/(t^5 x^2 w^8)$

\vskip 1 cm

\centerline{ \bf $\F^2_{(2,1)}( L11n358\{0,1\})\not=\F^2_{(2,1)}( L11n418\{0,0\} ) ) $  }

\vskip .5 cm

$\F^2_{(2,1)}( L11a435\{0,0\})= \{ F(\K\A), F(\K\B), F(\K\C) \}$

$F(\K\A)= (2 t^3 w^2+t^4 w^4-4 t^4 w^2-2 t^3+6 x t^2 w^2-2 t^5+5 t^5 w^2+t^4 x-2 t^5 w^4+
 2 t^6 w^4-4 t^6 w^2+3 t^4-3 w^6 x t^3+5 w^6 x t^4+w^6 x t^2-6 x w^6 t^5+4 x w^6 t^6-
 4 x w^4 t^6-t^5 x+2 t^7 w^2-t^8 w^2+t^8 w^4-2 t^7 w^4+x w^4 t^7+x t^7 w^2-
 2 x t^7 w^6-t^2 w^2+x t-x t^3-x t^2+2 x w^2-x+t^6+7 x t^5 w^4+t^2-5 x t^3 w^2-
 x w^4+3 x t^4 w^2+9 x t^3 w^4-3 x t w^2+2 x t w^4-9 x t^4 w^4-6 x t^2 w^4+
 x t^8 w^6-x t^8 w^4)/((t-1) w^8 t^5 x),$

$F(\K\B)= -(-7 x t^2 w^2-t^5+t^4 x+t^6 w^2+3 w^6 x t^3-5 w^6 x t^4-w^6 x t^2+5 x w^6 t^5-
3 x w^6 t^6+3 x w^4 t^6-x w^4 t^7+x t^7 w^6-x t+2 x t^2-2 x w^2+x-5 x t^5 w^4+
5 x t^3 w^2+x w^4-5 x t^4 w^2-8 x t^3 w^4+3 x t w^2-2 x t w^4+9 x t^4 w^4+
6 x t^2 w^4)/((t-1) w^8 t^5 x), $

$F(\K\C) = -(t^3 w^2+t^4 w^4-t^4 w^2-t^3-7 x t^2 w^2+t^5 w^2-2 t^5 w^4+t^6 w^4+3 w^6 x t^3-
5 w^6 x t^4-w^6 x t^2+4 x w^6 t^5-2 x w^6 t^6+2 x w^4 t^6-x w^4 t^7+x t^7 w^6-x t-
x t^3+2 x t^2-2 x w^2+x-6 x t^5 w^4+6 x t^3 w^2+x w^4-4 x t^4 w^2-8 x t^3 w^4+
2 x t^5 w^2+3 x t w^2-2 x t w^4+9 x t^4 w^4+6 x t^2 w^4)/((t-1) w^8 t^5 x).$

\vskip 1 cm

$\F^2_{(2,1)}( L11n418\{0,0\} )= \{ F(\H\A),F(\H\B),F(\H\C) \}$

$F(\H\A) = (-t^4 w^2+7 x t^2 w^2-t^5+2 t^5 w^2+t^6 w^4-3 t^6 w^2+t^4-3 w^6 x t^3+5 w^6 x t^4+
w^6 x t^2-5 x w^6 t^5+3 x w^6 t^6-3 x w^4 t^6-t^5 x+2 t^7 w^2-t^8 w^2+t^8 w^4-
2 t^7 w^4+x w^4 t^7+x t^7 w^2-2 x t^7 w^6+x t-2 x t^2+2 x w^2-x+t^6+6 x t^5 w^4-
5 x t^3 w^2-x w^4+4 x t^4 w^2+8 x t^3 w^4-3 x t w^2+2 x t w^4-9 x t^4 w^4-6 x t^2 w^4+
x t^8 w^6-x t^8 w^4)/(w^8 t^5 x (t-1)),$

$F(\H\B)= -(-t^3 w^2+t^4 w^2+t^3-6 x t^2 w^2-t^5 w^2+t^6 w^2-t^4+3 w^6 x t^3-5 w^6 x t^4-
w^6 x t^2+5 x w^6 t^5-3 x w^6 t^6+3 x w^4 t^6-x w^4 t^7+x t^7 w^6+t^2 w^2-x t+
x t^2-2 x w^2+x-6 x t^5 w^4-t^2+6 x t^3 w^2+x w^4-4 x t^4 w^2-9 x t^3 w^4+
x t^5 w^2+3 x t w^2-2 x t w^4+9 x t^4 w^4+6 x t^2 w^4)/(w^8 t^5 x (t-1)),$

$F(\H\C)=  -(t^4 w^2-7 x t^2 w^2-t^5 w^2+t^6 w^2-t^4+3 w^6 x t^3-5 w^6 x t^4-w^6 x t^2+
5 x w^6 t^5-3 x w^6 t^6+3 x w^4 t^6-x w^4 t^7+x t^7 w^6-x t+2 x t^2-2 x w^2+
x-6 x t^5 w^4+5 x t^3 w^2+x w^4-4 x t^4 w^2-8 x t^3 w^4+x t^5 w^2+3 x t w^2-
2 x t w^4+9 x t^4 w^4+6 x t^2 w^4)/(w^8 t^5 x (t-1)).$

\vfill
\newpage

\centerline{  $\F^3_{(1,1,1)}( L11a467\{0,1\} )=\F^3_{(1,1,1)}( L11a527\{0,0\}) $  }

 $F(\G)=F(\Q)=  -(3 x t^4-x^2 t^2-x^2 t^3+3 x^2 w^2+9 x^2 t^2 w^2+14 x^2 t^3 w^4+5 x^2 t w^4-
  10 x^2 t^3 w^6-4 x^2 t^3 w^2-3 x t^4 w^2-13 x^2 t^4 w^4+13 x^2 t^4 w^6+2 x^2 t^4 w^2+
  w^8 x^2 t^3-3 t^4 w^8 x^2+t^7 x^2 w^4-t^6 x^2 w^2-2 t^7 w^6 x^2+x t^7 w^2-t^5+
  7 t^5 x^2 w^4-10 t^5 x^2 w^6-t^6 x^2 w^4+5 t^6 x^2 w^6+t^5 x w^2+t^6 w^4 x-
  t^7 x w^4-t^6 x w^2+4 w^8 t^5 x^2-x^2 w^6 t-t^5 x^2 w^2+t^7 w^8 x^2-3 t^6 w^8 x^2+
  x^2 t^4-x t^5-x^2-x t^2 w^2+x t^2+5 w^6 x^2 t^2-4 w^2 t x^2-2 x^2 w^4-
  13 w^4 x^2 t^2)/(w^8 t^5 x^2)$

\vskip 1 cm

\centerline{ \bf $\F^2_{(2,1)}( L11a467\{0,1\} )\not=\F^2_{(2,1)} (L11a527\{0,0\} )$  }

\vskip .5 cm
$\F^2_{(2,1)}( L11a467\{0,1\} )=\{F(\G\A), F(\G\B),  F(\G\C)\}$

$ F( \G\A) = (-8 x t^4 w^2+27 x t^4 w^4-23 x t^4 w^6+15 t^3 w^6 x-6 t^2 w^6 x-7 w^8 t^5 x-
18 t^5 x w^4+22 t^5 w^6 x+t^4 w^4-2 t^5 w^4+x w^6 t+3 t^5 x w^2-4 t^7 w^8 x+
7 t^6 w^8 x+4 t^4 w^8 x-14 t^6 w^6 x+7 t^6 w^4 x-2 t^7 x w^4-t^3 w^8 x+t^5 w^2+
t^6 w^4+x-26 x t^3 w^4-3 x w^2+2 x w^4-x t^8 w^6+x t^8 w^8+7 x t w^2+6 t^7 w^6 x-
x t-14 x t^2 w^2+2 x t^2+t^3 w^2-x t^3+18 x t^2 w^4+13 x t^3 w^2-7 x w^4 t-
t^4 w^2-t^3)/(x t^5 w^8 (1-t)),$

$F(\G\B) = (-x t^4-7 x t^4 w^2+27 x t^4 w^4-23 x t^4 w^6+15 t^3 w^6 x-6 t^2 w^6 x-x t^7 w^2-
7 w^8 t^5 x+2 t^5-19 t^5 x w^4+24 t^5 w^6 x-t^4 w^4+2 t^5 w^4+x w^6 t+t^5 x w^2-
4 t^7 w^8 x+7 t^6 w^8 x+4 t^4 w^8 x-16 t^6 w^6 x+9 t^6 w^4 x-2 t^7 x w^4-t^3 w^8 x-
5 t^5 w^2+2 t^7 w^4-2 t^7 w^2-2 t^6 w^4+4 t^6 w^2+x t^5-t^8 w^4-t^6+t^8 x w^4+x-
27 x t^3 w^4-3 x w^2+t^2 w^2-t^2+2 x w^4-2 x t^8 w^6+x t^8 w^8+7 x t w^2+7 t^7 w^6 x-
x t-13 x t^2 w^2+x t^2-2 t^3 w^2+x t^3+18 x t^2 w^4+12 x t^3 w^2-7 x w^4 t+4 t^4 w^2+
t^8 w^2-3 t^4+2 t^3)/(x t^5 w^8 (1-t)),$

$F(\G\C) = (x t^4-9 x t^4 w^2+27 x t^4 w^4-23 x t^4 w^6+15 t^3 w^6 x-6 t^2 w^6 x-7 w^8 t^5 x-
t^5-17 t^5 x w^4+23 t^5 w^6 x+x w^6 t+t^5 x w^2-4 t^7 w^8 x+7 t^6 w^8 x+4 t^4 w^8 x-
15 t^6 w^6 x+8 t^6 w^4 x-2 t^7 x w^4-t^3 w^8 x+t^6 w^2+x-26 x t^3 w^4-3 x w^2+
2 x w^4-x t^8 w^6+x t^8 w^8+7 x t w^2+6 t^7 w^6 x-x t-14 x t^2 w^2+2 x t^2+
18 x t^2 w^4+12 x t^3 w^2-7 x w^4 t)/(x t^5 w^8 (1-t)).$

 \vskip 1 cm
$\F^2_{(2,1)}(L11a527\{0,0\} )=\{ F(\Q\A), F(\Q\B),  F(\Q\C) \}$

$F(\Q\A) = (-8 x t^4 w^2+27 x t^4 w^4-23 x t^4 w^6+15 t^3 w^6 x-6 t^2 w^6 x-7 w^8 t^5 x-
18 t^5 x w^4+23 t^5 w^6 x+x w^6 t+2 t^5 x w^2-4 t^7 w^8 x+7 t^6 w^8 x+4 t^4 w^8 x-
15 t^6 w^6 x+8 t^6 w^4 x-2 t^7 x w^4-t^3 w^8 x-t^5 w^2+t^6 w^2+x-26 x t^3 w^4-
3 x w^2+2 x w^4-x t^8 w^6+x t^8 w^8+7 x t w^2+6 t^7 w^6 x-x t-14 x t^2 w^2+
2 x t^2+18 x t^2 w^4+12 x t^3 w^2-7 x w^4 t+t^4 w^2-t^4)/(x t^5 w^8 (1-t)),$

$F(\Q\B)= (-8 x t^4 w^2+27 x t^4 w^4-23 x t^4 w^6+15 t^3 w^6 x-6 t^2 w^6 x-x t^7 w^2-
7 w^8 t^5 x+t^5-18 t^5 x w^4+23 t^5 w^6 x+x w^6 t+t^5 x w^2-4 t^7 w^8 x+7 t^6 w^8 x+
4 t^4 w^8 x-15 t^6 w^6 x+8 t^6 w^4 x-2 t^7 x w^4-t^3 w^8 x-2 t^5 w^2+2 t^7 w^4-
2 t^7 w^2-t^6 w^4+3 t^6 w^2+x t^5-t^8 w^4-t^6+t^8 x w^4+x-26 x t^3 w^4-3 x w^2+
2 x w^4-2 x t^8 w^6+x t^8 w^8+7 x t w^2+7 t^7 w^6 x-x t-14 x t^2 w^2+2 x t^2+
18 x t^2 w^4+12 x t^3 w^2-7 x w^4 t+t^4 w^2+t^8 w^2-t^4)/(x t^5 w^8 (1-t)),$

$F(\Q\C) = (-8 x t^4 w^2+27 x t^4 w^4-23 x t^4 w^6+15 t^3 w^6 x-6 t^2 w^6 x-7 w^8 t^5 x-
18 t^5 x w^4+23 t^5 w^6 x+x w^6 t+2 t^5 x w^2-4 t^7 w^8 x+7 t^6 w^8 x+4 t^4 w^8 x-
15 t^6 w^6 x+8 t^6 w^4 x-2 t^7 x w^4-t^3 w^8 x-t^5 w^2+t^6 w^2+x-27 x t^3 w^4-
3 x w^2+t^2 w^2-t^2+2 x w^4-x t^8 w^6+x t^8 w^8+7 x t w^2+6 t^7 w^6 x-x t-
13 x t^2 w^2+x t^2-t^3 w^2+18 x t^2 w^4+13 x t^3 w^2-7 x w^4 t+t^4 w^2-
t^4+t^3)/(x t^5 w^8 (1-t)).$

\vfill \newpage
\section*{Appendix 4  }
\centerline{$\F^3_{(1,1,1)}$  and  $\F^2_{(2,1)}$ on  the  links  $\R=L10n76\{1,1\}$ and  $\S=L11n425\{1,0\}$.}

$\F^3_{(1,1,1)}(\R)= \{F^3(\R) \}$

$F^3(\R) = (-x^2 w^4 t+t^3 w^6 x^2+3 x^2 t^2 w^4-4 x^2 t^3 w^4+t^5 x w^2-t^5 x w^4-x^2 t^4 w^2
+3 x^2 t^3 w^2-x^2 t^5 w^4-4 x^2 t^2 w^2+x^2 t^5 w^6+3 x^2 t^4 w^4-2 x^2 t^4 w^6
+t^5-t^4 x+x^2 t^2-2 x^2 t-x^2 w^2+x^2+3 w^2 t x^2-x t+x t w^2-x t^2 w^2+t^4 w^4 x
+x t^2 w^4-x t^3 w^4+2 x t^3 w^2-x t^3)/(t^5 x^2 w^8).$

\vskip .5 cm

$\F^2_{(2,1)}(\R)= \{F(\R\A), F(\R\B), F(\R\C) \}$

$F(\R\A) = (-3 t^5 w^6 x-t^5 x w^2+4 t^5 x w^4+4 x t^4 w^2+2 t^5 w^4-t^5 w^2-t^6 w^4+t^3
+t^6 w^6 x-t^6 w^4 x-t w^2+t-x+2 t^4 w^2+3 x t-4 x t w^2+x w^2-t^2 w^4+2 t^3 w^4
+2 t^2 w^2-t^2+7 x t^2 w^2-3 x t^2-7 t^4 w^4 x-t^3 w^6 x+3 t^4 w^6 x-2 t^4 w^4
-4 x t^2 w^4+7 x t^3 w^4-7 x t^3 w^2-3 t^3 w^2+x t^3+x w^4 t)/(w^8 t^5 x (t-1)),$

 $F(\R\B)= (-4 t^5 w^6 x+t^5 x w^2+3 t^5 x w^4+5 x t^4 w^2+t^5-t^4 x-t^6 w^2+2 t^6 w^6 x
 -2 t^6 w^4 x-x+2 x t-3 x t w^2+x w^2+7 x t^2 w^2-3 x t^2-8 t^4 w^4 x-2 t^3 w^6 x
 +4 t^4 w^6 x-4 x t^2 w^4+7 x t^3 w^4-5 x t^3 w^2+x w^4 t)/(w^8 t^5 x (t-1)),$

  $F(\R\C)= (-4 t^5 w^6 x+t^5 x w^2+3 t^5 x w^4+5 x t^4 w^2+t^5-t^4 x-t^6 w^2+2 t^6 w^6 x
  -2 t^6 w^4 x-x+2 x t-3 x t w^2+x w^2+7 x t^2 w^2-3 x t^2-8 t^4 w^4 x-2 t^3 w^6 x
  +4 t^4 w^6 x-4 x t^2 w^4+7 x t^3 w^4-5 x t^3 w^2+x w^4 t)/(w^8 t^5 x (t-1)).$

\vskip 1 cm
$\F^3_{(1,1,1)}(\S)= \{F^3(\S) \}$

$F^3(\S) = -(x^2 w^4 t-2 t^3 w^6 x^2-3 x^2 t^2 w^4+4 x^2 t^3 w^4-2 t^5 x w^2+2 t^5 x w^4
+x^2 t^4 w^2-2 x^2 t^3 w^2+x^2 t^5 w^4+5 x^2 t^2 w^2-4 x^2 t^4 w^4+2 x^2 t^4 w^6
-t^5+2 t^4 x-2 x^2 t^2+x^2 t+x^2 w^2-x^2-t^6 x+t^5 x^2+t^4 x^2-t^6 x^2 w^2
-2 t^5 x^2 w^2+t^6 x^2 w^4+t^6 x w^2-2 w^2 t x^2-2 t^4 w^4 x-2 x t^3 w^2
+2 x t^3)/(t^5 x^2 w^8).$

\vskip .5 cm
$\F^2_{(2,1)}(\S)= \{F(\S\A), F(\S\B), F(\S\C) \}$

$F(\S\A)= (-3 t^5 w^6 x-t^5 x w^2+4 t^5 x w^4+4 x t^4 w^2+2 t^5 w^4-t^5 w^2-t^6 w^4+t^3
+t^6 w^6 x-t^6 w^4 x-x+t^4 w^2+2 x t-3 x t w^2+x w^2+7 x t^2 w^2-3 x t^2-8 t^4 w^4 x
-2 t^3 w^6 x+4 t^4 w^6 x-t^4 w^4-4 x t^2 w^4+7 x t^3 w^4-6 x t^3 w^2-t^3 w^2
+x t^3+x w^4 t)/(w^8 t^5 x (t-1)),$

$F(\S\B)= -(4 t^5 w^6 x-t^5 x w^2-3 t^5 x w^4-5 x t^4 w^2-t^5+t^4 x-2 t^6 w^6 x+2 t^6 w^4 x
+t^6 x+x-t^6 x w^2-t^7+t^6+t^7 w^2-t^7 x w^2+t^7 x w^4-2 x t+3 x t w^2-x w^2
-7 x t^2 w^2+3 x t^2+8 t^4 w^4 x+2 t^3 w^6 x-4 t^4 w^6 x+4 x t^2 w^4-7 x t^3 w^4
+5 x t^3 w^2-x w^4 t)/(w^8 t^5 x (t-1)),$

$F(\S\C)= (-3 t^5 w^6 x-t^5 x w^2+4 t^5 x w^4+4 x t^4 w^2+2 t^5 w^4-t^5 w^2-t^6 w^4+t^3
+t^6 w^6 x-t^6 w^4 x-x+t^4 w^2+2 x t-3 x t w^2+x w^2+7 x t^2 w^2-3 x t^2-8 t^4 w^4 x
-2 t^3 w^6 x+4 t^4 w^6 x-t^4 w^4-4 x t^2 w^4+7 x t^3 w^4-6 x t^3 w^2-t^3 w^2+x t^3
+x w^4 t)/(w^8 t^5 x (t-1))$.

\vfill\newpage

\end{document}